\documentclass[12pt]{article}

\usepackage{amssymb,amsmath}

\newcommand{\R}{\mathbb{R}}

\newcommand{\dom}{{\rm dom\,}}
\newcommand{\Dom}{{\rm Dom\,}}

\newcommand{\Gr}{{\rm Gr}}

\newcommand{\beq}{\begin{equation}}
\newcommand{\eeq}{\end{equation}}

\newcommand{\eps}{{\varepsilon}}

\newcommand{\tos}{\rightrightarrows}

\newenvironment{proof}
         {\begin{trivlist}\item[
         {\bf Proof.}]}{{\hfill $\square$} \end{trivlist}}

\newtheorem{theo}{Theorem}[section]

\newtheorem{rem}[theo]{Remark}
\newtheorem{lemma}[theo]{Lemma}
\newtheorem{prop}[theo]{Proposition}

\newtheorem{cor}[theo]{Corollary}

\begin{document}

\title{\LARGE\bf Generic variational principles for supinf problems with constraints}
\author{\bf  D. Gaumont, D. Kamburova and J.P. Revalski\thanks{The work of the third author was partially supported by the Centre of Excellence in Informatics and ICT under the Grant No BG16RFPR002-1.014-0018-C01, financed by the Research, Innovation and Digitalization for Smart Transformation Programme 2021-2027 and co-financed by the European Union}}

\date{}

\maketitle

\begin{abstract}

We study supinf problems for perturbations of a given function of two variables by continuous bounded functions in the underlying spaces and the properties of the associated solution mapping. We provide conditions on the initial data such that most (in the Baire category sense) of the functions from the perturbation space  secure existence and uniqueness of the solution of the supinf problem for the corresponding perturbed functions. In this way we obtain generic variational principles for such problems. 

\end{abstract}

\noindent
{\bf Mathematics Subject Classification 2020:} 90C47, 90C48, 91A44, 49K35

\noindent
{\bf Key words and phrases:} constrained supinf problem, two-level optimization, solution mapping for optimization problems, generic variational principles in optimization, well-posedness of supinf problems

\section{Introduction and preliminary results}
Let $X$ and $Y$ be completely regular topological spaces and $f: X \times Y \rightarrow [-\infty, \infty]$ be an extended real-valued function (which we will suppose to be finite at the points of some nonempty set of the Cartesian product $X\times Y$). We will be interested in the following (constrained) supinf problem:
$$
(P) \qquad \sup_{x \in X} \inf_{y \in Kx} f(x,y),
$$
where the  set-valued mapping  $K: X\rightrightarrows Y$, giving the constraints,  is with nonempty images.  A solution to the problem (P) is a couple  $(x_0,y_0)\in X\times Y$ which satisfies the following conditions:

$$
f(x_0,y_0)=\inf_{y\in Kx_0} f(x_0,y)=\sup_{x\in X}\inf_{y\in Kx} f(x,y).
$$
The quantity 
$$
v_f:=\sup_{x\in X}\inf_{y\in Kx} f(x,y)
$$
is called {\it value of the problem} (P).

\smallskip

In such a setting fall, for example, the so-called  "leader-follower" games, in which the first player (the leader) makes her/his choice first, with the aim to maximize her/his profit which is given by the function $f$. When the choice of the first player $x\in X$ is made, the second player makes her/his choice in its set of feasible choices $Kx$. The guaranteed maximal utility which the first player can secure, irrespective from the behaviour of the second one, is the value of the corresponding supinf problem. When the set-valued mapping $K$ is given as the set of solutions to some optimization problem, i.e. for any $x\in X$ we have
$$
Kx:=\{y': g(x,y')=\inf_{y\in Y} g(x,y)\},
$$
for some given function $g:X\times Y\to \R$, this game is known as Stackelberg problem (called also "two level optimization problem") - see e.g \cite{H_Stack}, \cite{LoM1}.

In the unconstrained case, i.e., when we have $Kx=Y$ for any $x\in X$, the value of the problem (P) gives, for example, the guaranteed gain for some of the two players in noncooperative games. In the unconstrained case, the problem (P) is also related to the question of existence of saddle points for the function $f$.

\smallskip

Our aim in this article is to study perturbations of the given function $f$ by continuous bounded functions  defined on $X\times Y$ (the latter, considered with some usual product topology) which secure (at least) existence of the solution of the problem (P) for the perturbed function. In particular, we will be interested in such perturbations consisting of additively separable continuous bounded functions defined on $X$ and $Y$, respectively. We investigate some important properties of the solution set-valued mapping which assigns to a given perturbation the solutions to the corresponding perturbed supinf problems. This allows us to provide conditions on the data which entail   that the set of continuous bounded perturbations of $f$ which provide  existence and uniqueness (even stronger, {\it well-posedness} - see the definition in Section 3) of the supinf problem for the perturbed function form a dense and $G_\delta$-subset of the corresponding Banach space of perturbations. Thus, most of the perturbations (in the Baire category sense) give rise well-posed perturbed problems and this constitutes {\it a generic variational principle}  for such problems.

Let us mention that such generic variational principles have been extensively studied in the case of one-level optimization - see, e.g,  the following, far to be exhaustive, list of references \cite{CKR1,CKR3, DGoZ1, DGoZ2, ILR, IZa, IvKR,  LaR,TZl2,S}. On the contrary, the list of such principles for supinf problems, to the best knowledge of the authors,  is rather short - see e.g. \cite{KL,KRi}, where generic variational principles are proved, when the objective function is the zero one, and \cite{GKaR,KR2}, where  dense variational principles are proved for the setting above. 

To introduce some preliminary notions and results, let us first recall  the two most classical notions of continuity of a set-valued mapping  $\Phi:Z\tos T$ between two topological spaces $Z$ and $T$. $\Phi$ is called {\it upper semicontinuous} (in short usc)  at $z_0$ if for every nonempty open set $V$ of $T$ containing $\Phi(z_0)$ there is a nonempty open set $U$ of $Z$ containing $z_0$ such that $\Phi(z)\subset V$ for every $z\in U$. The mapping $\Phi$ is usc in $Z$ if it is usc at any point of $Z$. When $\Phi$ is with nonempty images, it is called {\it lower semicontinuous} (in short lsc) at $z_0$ if  for every nonempty open set $V$ of $T$ intersecting  $\Phi(z_0)$ there is a nonempty open set $U$ of $Z$ containing $z_0$ such that $\Phi(z)\cap  V\neq\emptyset$ for every $z\in U$ and  $\Phi$ is lsc in $Z$ if it is lsc at any point of $Z$. Since we will come across with set-valued mappings which can have empty images at some points, let us recall that for a given set-valued mapping $\Phi:Z\tos T$, the set $\Dom(\Phi):=\{z\in Z: \Phi(z)\neq\emptyset\}$  denotes the {\it domain} of $\Phi$. As usual, for such a mapping, the set $\Gr(\Phi):=\{(z,t): t\in \Phi(z)\}$ is the {\it graph} of $\Phi$. 

\smallskip 

 It has turned out in preceding considerations of supinf problems  that, in order to expect reasonable properties of such  problems to hold, the following conditions on the initial data, the objective function  $f:X\times Y\to [-\infty,+\infty]$ and the set-valued mapping $K:X\tos Y$, seem natural:
 
 \begin{itemize}

 \item[{\rm (A1})] the function $w(\cdot)=\inf_{y\in K(\cdot)} f(\cdot,y)$ is bounded from above on $X$ and proper as a function with values in $\R\cup\{-\infty\}$;

 \item[{\rm (A2})]  $f$ is upper semicontinuous in $X \times Y$;

 \item[{\rm (A3)}] for any $x\in X$ the function $f(x,\cdot)$ is lower semicontinuous in $Y$.

\end{itemize}

Here the term {\it proper} for a given extended real-valued function   means, as usual, that the function has at least one finite value. In particular, condition (A1) implies that the objective function $f$ must be also proper, that is, the {\it effective domain} $\dom(f):=\{ (x,y)\in X\times Y: -\infty<f(x,y)<\infty\}$ of $f$ is not empty.

\smallskip

For a topological space $Z$, denote by $C(Z)$ the family of all bounded and continuous real-valued functions in $Z$. Equipped with the usual sup-norm $\Vert g\Vert_{Z,\infty}:=\sup\{\vert g(z)\vert: z\in Z\}$, $g\in C(Z)$, the space $(C(Z),\Vert\cdot\Vert_{Z,\infty})$ is a real Banach space. The following perturbation lemma (where, as usual $\R_+$ designates the set of nonnegative real numbers)  was first proved in \cite{KR2} for the unconstrained case (and in this case only upper semicontinuity of $f$ with respect to the first variable is enough) and then in \cite{GKaR} in the constrained setting:

\begin{lemma} \label{basic_lemma}
Let $X$ and $Y$ be completely regular topological spaces, $f:X\times Y\to [-\infty,+\infty]$ be an extended-real valued function and $K:X\tos Y$ be a nonempty-valued set-valued mapping which satisfy conditions (A1)-(A3).
Suppose that $K: X \rightrightarrows Y$ is lsc with closed images. Let $\varepsilon>0$ and $x_0 \in X$ be such that $w(x_0)>sup_{x \in X} w(x) - \varepsilon$ and let $\delta>0$ and $y_0 \in Kx_0$ be such that $f(x_0,y_0)<\inf_{y \in Kx_0} f(x_0,y)+\delta$. Then, there exist continuous bounded functions $g_0: X \rightarrow \R_+$ and $h_0: Y\to \R_+$, such that $g_0(x_0)=h_0(y_0)=0$, $||g_0||_{X,\infty} \leq \varepsilon$, $||h_0||_{Y,\infty} \leq \delta$ and the supinf problem $\sup_{x \in X} \inf_{y \in Kx} \{f(x,y)-g_0(x)+h_0(y)\}$ has a solution at $(x_0,y_0)$.

\end{lemma}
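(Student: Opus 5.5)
The plan is to build the two perturbations one after the other, $h_0$ first and then $g_0$, using one construction twice: \emph{a nonnegative upper semicontinuous function $u$ on a completely regular space $Z$ with $u(z_0)=0$ and $u\le a$ is dominated by a continuous $\varphi:Z\to[0,a]$ with $\varphi(z_0)=0$.} To get $\varphi$, put $V_k:=\{z: u(z)<a2^{-k}\}$ for $k\ge 0$. Each $V_k$ is open because $u$ is usc, and contains $z_0$. Complete regularity gives continuous $\varphi_k:Z\to[0,1]$ with $\varphi_k(z_0)=0$ and $\varphi_k\equiv1$ off $V_k$. Set $\varphi:=\sup_{k\ge1}a2^{-k+1}\varphi_k$. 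The $k$-th term is at most $a2^{-k+1}$, so the partial suprema converge uniformly and $\varphi$ is continuous. Also $0\le\varphi\le a$ and $\varphi(z_0)=0$. If $u(z)>0$, take $k\ge1$ with $z\in V_{k-1}\setminus V_k$ (since $V_0=Z$). Then $\varphi_k(z)=1$, so $\varphi(z)\ge a2^{-k+1}>u(z)$.

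\emph{Inner step.} The value $v:=\sup_X w$ is finite by (A1). Since $w(x_0)>v-\eps$, $w(x_0)$ is finite, and hence so is $f(x_0,y_0)$. Put $a:=f(x_0,y_0)-w(x_0)\in[0,\delta)$ and $u(y):=\max\{0,f(x_0,y_0)-f(x_0,y)\}$. By (A3), $u$ is usc on $Y$. Moreover $u(y_0)=0$ and $u\le a$ on $Kx_0$. To keep the bound valid on all of $Y$, I replace $u$ by $\min\{u,a\}$, which is still usc and still dominates the needed quantity on $Kx_0$. (If $a=0$, take $h_0\equiv0$.) The construction gives continuous $h_0:Y\to[0,a]$ with $h_0(y_0)=0$ and $h_0\ge u$ on $Kx_0$. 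Thus $\|h_0\|_{Y,\infty}\le a<\delta$, and $f(x_0,y)+h_0(y)\ge f(x_0,y_0)$ for all $y\in Kx_0$. Hence $\hat w(x):=\inf_{y\in Kx}\{f(x,y)+h_0(y)\}$ satisfies $\hat w(x_0)=f(x_0,y_0)$, with the infimum attained at $y_0$.

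\emph{Outer step.} I need $\hat w(x)-g_0(x)\le \hat w(x_0)$ for every $x$. The crucial estimate comes from the bound $\|h_0\|\le a$ just obtained: $\hat w(x)\le w(x)+a\le v+a$. Hence $\hat w(x)-\hat w(x_0)\le v+a-w(x_0)-a=v-w(x_0)=:b<\eps$. Next, $\hat w$ is usc on $X$. This is the standard fact that $x\mapsto\inf_{y\in Kx}F(x,y)$ is usc when $F=f+h_0$ is usc (by (A2)) and $K$ is lsc. Indeed, if $\hat w(x')<t$, pick $y'\in Kx'$ with $F(x',y')<t$. Use upper semicontinuity of $F$ to get a product neighbourhood $U\times W$ of $(x',y')$ on which $F<t$. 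Then lower semicontinuity of $K$ yields, for all $x$ near $x'$, a point of $Kx\cap W$, so $\hat w(x)<t$. Therefore $r(x):=\min\{b,\max\{0,\hat w(x)-\hat w(x_0)\}\}$ is usc, $r(x_0)=0$ and $r\le b$. Because $\hat w(x)-\hat w(x_0)\le b$ already, $r$ equals $\max\{0,\hat w(x)-\hat w(x_0)\}$. Applying the construction gives continuous $g_0:X\to[0,b]$ with $g_0(x_0)=0$ and $g_0\ge r$. Then $\|g_0\|_{X,\infty}\le b<\eps$ and $\hat w(x)-g_0(x)\le\hat w(x_0)=\hat w(x_0)-g_0(x_0)$. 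So $x_0$ maximizes $x\mapsto\inf_{y\in Kx}\{f(x,y)-g_0(x)+h_0(y)\}$, and the inner infimum at $x_0$ is attained at $y_0$. This is the required solution at $(x_0,y_0)$. Points where $\hat w=-\infty$ cause no trouble, since there $r=0$ and the inequality is trivial.

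The main obstacle I expect is the interaction between the two steps. A careless choice of $h_0$ could raise $\sup_X \hat w$ by up to $\delta$, which would destroy the $\eps$-margin needed for $g_0$. The remedy above is to make $\|h_0\|$ exactly bounded by the gap $a=f(x_0,y_0)-w(x_0)$. This gap is precisely the amount by which $\hat w(x_0)$ rises, so the two effects cancel and the margin $v-w(x_0)<\eps$ survives. The second delicate point is producing a \emph{continuous} majorant of a merely usc function that vanishes at the prescribed point. This is exactly where complete regularity enters, through the dyadic family $V_k$.
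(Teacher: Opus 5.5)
The paper does not prove this lemma; it quotes it from \cite{KR2} and \cite{GKaR}, so there is no in-text argument to compare against. Judged on its own, your proof is correct and complete, up to two small slips that are easily repaired.

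\emph{First slip.} In the majorant construction, $V_0=\{u<a\}$ is not all of $Z$ when $u$ attains the value $a$. This does happen for your truncated $u$, wherever $f(x_0,y)\le w(x_0)$. Such a $z$ lies outside $V_1\subset V_0$, so $\varphi_1(z)=1$ and $\varphi(z)\ge a=u(z)$. Hence the domination $\varphi\ge u$, which is all you use, still holds; it is just not strict there.

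\emph{Second slip.} The construction needs $a>0$, since otherwise $z_0\notin V_k$. You handle $a=0$ in the inner step. In the outer step you should do the same: when $b=v-w(x_0)=0$, take $g_0\equiv 0$, because then $\hat w\le \hat w(x_0)$ already.

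The substantive ingredients are all sound:
\begin{itemize}
\item upper semicontinuity of $\hat w$ from (A2) together with lower semicontinuity of $K$;
\item the dyadic Urysohn-type continuous majorant vanishing at a prescribed point, which is the right tool since a merely completely regular space has no insertion theorem available;
\item above all, the choice $\Vert h_0\Vert_{Y,\infty}\le a=f(x_0,y_0)-w(x_0)$.
\end{itemize}
That last choice makes the rise of $\hat w$ at $x_0$ exactly offset its possible rise elsewhere, so the margin $v-w(x_0)<\eps$ survives for the construction of $g_0$. With only $\Vert h_0\Vert_{Y,\infty}\le\delta$, that estimate would break down.

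You also never use that $K$ has closed images, so your argument shows this hypothesis is superfluous for the lemma. Finally, you actually obtain the strict bounds $\Vert h_0\Vert_{Y,\infty}<\delta$ and $\Vert g_0\Vert_{X,\infty}<\eps$.
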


Notice that, under the conditions of the lemma, points $x_0,y_0$ as in the statement of the lemma always exist.

The rest of the paper is organized as follows: in the next Section 2 we study several important  properties of solution  mappings associated with supinf problems for the perturbed functions. In Section 3 we provide conditions on the initial data in order to have generic variational principles (of the above mentioned type) for supinf problems.

\section{Properties of solution mappings for supinf problems}

In this section,  given  completely regular topological spaces $X$ and $Y$,  an extended real-valued function $f:X\times Y\to [-\infty,+\infty]$ and a set-valued mapping $K:X\tos Y$ with nonempty images, we will investigate the properties of solution mappings which assign the solutions of the perturbed supinf problem, when the perturbations are continuous bounded functions in $X\times Y$.

The first one is when we perturb $f$  by a sum of functions from $C(X)$ and $C(Y)$, namely, 
$$
S_f: C(X) \times C(Y) \rightrightarrows X \times Y,
$$
which assigns to every couple of functions $(g,h) \in C(X)\times  C(Y)$ the set of solutions (possibly empty) of the supinf problem 
$$
\sup_{x \in X} \inf_{y \in Kx} \{f(x,y)+g(x)+h(y)\},
$$
with constraints given by the set-valued mapping $K:X\tos Y$. In $C(X)$ and $C(Y)$ we consider the usual sup norms $\Vert g\Vert_{X,\infty}$ and $\Vert h\Vert _{Y,\infty}$ correspondingly, and on the Cartesian product $C(X)\times C(Y)$ some usual product norm generated by the norms in $C(X)$ and $C(Y)$ respectively.

For our subsequent considerations we will need sometimes to impose  stronger conditions on the continuity properties of the objective function than those in (A2)-(A3). More precisely, we will consider the following condition:

 \begin{itemize}

 \item[{\rm (A2$'$})]  The function $f:X\times Y\to [-\infty,\infty]$ is continuous.

 \end{itemize}
 
Here the continuity of $f$ is understood in the usual extended sense. Evidently, (A2$'$) implies (A2) and (A3). Condition (A2$'$) implies also that the domain of $f$ is open in $X\times Y$. 

The following result is the well-known Berge theorem, here given for an extended real-valued function. Its proof follows the path of the proof of the classical Berge theorem, taking into consideration that the objective function (and the infimal value function) may have points where the values are not finite.

\begin{theo}(Berge)\label{Berge}
Let $X$ and $Y$ be completely regular topological spaces,  and a given   extended real-valued function   $f:X\times Y\to [-\infty, +\infty]$ and a set-valued mapping $K:X\tos Y$ with nonempty images satisfy the conditions (A1)-(A2\,$'$). Suppose, in addition,  that  $K$ is both lsc and usc in $X$ and with compact images. Then, the function $\inf_{y \in K(\cdot)} f(\cdot,y):X\to [-\infty,+\infty)$ is proper and continuous. 
\end{theo}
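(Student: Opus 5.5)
Set $w(x)=\inf_{y\in Kx} f(x,y)$. Properness of $w$ (as a map into $[-\infty,+\infty)$) is part of (A1), and so is the fact that $w<+\infty$ everywhere. It therefore remains to show that $w$ is continuous at every $x_0\in X$, as an extended-real-valued function. I will split this into upper semicontinuity and lower semicontinuity, each phrased with strict level sets so that infinite values need no separate treatment. Upper semicontinuity means that for every real $r>w(x_0)$ there is a neighbourhood $U$ of $x_0$ with $w<r$ on $U$. Lower semicontinuity means that for every real $r<w(x_0)$ there is a neighbourhood $U$ of $x_0$ with $w>r$ on $U$; this is vacuous when $w(x_0)=-\infty$.

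Upper semicontinuity uses lower semicontinuity of $K$ together with (A2). Let $r>w(x_0)$. Pick $y_0\in Kx_0$ with $f(x_0,y_0)<r$. Since $f$ is usc, there are open sets $U_1\ni x_0$ and $V\ni y_0$ with $f<r$ on $U_1\times V$. Because $K$ is lsc and $V\cap Kx_0\neq\emptyset$, there is an open $U_2\ni x_0$ with $Kx\cap V\ne\emptyset$ for all $x\in U_2$. For $x\in U_1\cap U_2$ choose $y\in Kx\cap V$; then $w(x)\le f(x,y)<r$. This step needs only (A2) and lsc of $K$.

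Lower semicontinuity uses upper semicontinuity of $K$, compactness of its images, and lower semicontinuity of $f$ on $X\times Y$ (which (A2$'$) supplies). Let $r<w(x_0)$. Then $f(x_0,y)>r$ for every $y\in Kx_0$. By lsc of $f$, each $y\in Kx_0$ has a product neighbourhood $U_y\times V_y$ on which $f>r$. By compactness of $Kx_0$, finitely many sets $V_{y_1},\dots,V_{y_n}$ cover $Kx_0$. Put $V=\bigcup_i V_{y_i}$ and $U'=\bigcap_i U_{y_i}$. By usc of $K$ there is an open $U''\ni x_0$ with $Kx\subset V$ for $x\in U''$. For $x\in U'\cap U''$ and any $y\in Kx$ we have $y\in V_{y_i}$ for some $i$, so $f(x,y)>r$.

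The main obstacle is in this last step: we only get the pointwise bound $f(x,\cdot)>r$ on the compact set $Kx$, and we need a strict inequality for the infimum. I would handle it by running the covering argument with an intermediate level $r'$, where $r<r'<w(x_0)$ (possible because $r<w(x_0)$ and $w(x_0)$ may be $+\infty$ only formally). This gives $w(x)\ge r'>r$ on $U'\cap U''$. Combining the two semicontinuities yields continuity of $w$ at $x_0$ in the extended sense, and properness has already been noted.
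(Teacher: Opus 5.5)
Your proof is correct and is essentially the argument the paper has in mind. The paper gives no details beyond saying that one follows the classical Berge proof while allowing for infinite values, and your decomposition is that adaptation: upper semicontinuity of $w$ from lsc of $K$ and (A2), and lower semicontinuity of $w$ from usc and compact images of $K$ together with lower semicontinuity of $f$, both phrased through strict level sets so that infinite values need no separate case (the intermediate level $r'$ correctly secures $w>r$ near $x_0$).
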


The following remark is evident and will be used frequently in the sequel.

\begin{rem}\label{basic_remark}
{\rm
If a function $f:X\times Y\to[-\infty,\infty]$ satisfies the conditions (A1)-(A3) (resp. (A1)-(A2$'$)) with respect to the mapping $K:X\tos Y$, then for any continuous and bounded real-valued function $u:X\times Y\to  \R$, the function $f+u$ also satisfies the conditions (A1)-(A3) (resp. (A1)-(A2$'$)) with respect to the mapping $K$. Moreover,  for such an $u$, the domain $\dom(f)$ of $f$ is the same as the domain of $f+u$, and the domain   $\dom(w)$ of the function $w$ from the assumption (A1) is the same as the domain of the function $\inf_{y\in K(\cdot)}\{f(\cdot,y)+u(\cdot,y)\}$. Thus, Lemma \ref{basic_lemma} and Berge Theorem \ref{Berge} hold also for any such  function $f+u$.  In particular, all these properties are true for any additively separable function $u:=g+h$, with $g\in C(X)$ and $h\in C(Y)$.
 
}
\end{rem}

Before proving the next result, let us recall a notion of continuity for a set-valued mapping, which generalizes the similar one for single-valued mappings given by Kempisty \cite{Ke}:  a set-valued mapping $\Phi: Z\tos T$ between the topological spaces $Z$ and $T$ is called {\it quasi-continuous}  if for every two open sets $U$ in $Z$ and $V$ in $T$ such that $\Phi(U)\cap V\neq\emptyset$, there exists a nonempty open set $U'$ in $Z$ such that $U'\subset U$ and $\Phi(U')\subset V$.  When the mapping $\Phi$ is usc and with nonempty compact values in $Z$, this property characterizes the fact that $\Phi$ is a {\it minimal } mapping in the sense of graph inclusions among the usc mappings with nonempty compact values between $Z$ and $T$- see Christensen \cite{Chr}.

The following proposition gives some of the fundamental  properties of the above introduced solution mapping. As usual, given a metric space $(Z,d)$, the symbol $B_Z(z,r)$ designates the open ball with center at $z\in Z$ and radius $r>0$. If $Z$ is a topological space and $A$ is its subset, the symbol $\overline{A}$ means the closure of $A$ in $Z$.  

\begin{prop}\label{solution_map_1}
Let $X$ and $Y$ be completely regular topological spaces,  and a given   extended real-valued function   $f:X\times Y\to [-\infty, +\infty]$ and a set-valued mapping $K:X\tos Y$ with nonempty images satisfy the conditions (A1)-(A2\,$'$). Suppose, in addition, that  $K$ is both lsc and usc in $X$ and with compact images. Then, the solution mapping $S_f:C(X)\times C(Y)\tos X\times Y$  possesses the following properties:

 \begin{enumerate}
 
 \item[{\rm (a)}] the domain $\Dom(S_f)$ is dense in $C(X)\times C(Y)$;
 
 \item[{\rm (b)}] the graph $\Gr(S_f)$ of $S_f$ is closed in the product topology of $C(X)\times C(Y)$ and $X\times Y$;

 \item[{\rm (c)}] the mapping $S_f$ is quasi-continuous;
    
\item[{\rm (d)}] the mapping $S_f$ is open as a mapping from $C(X)\times C(Y)$  into $\Gr(K)$.
 
 \end{enumerate}
 
\end{prop}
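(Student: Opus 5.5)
Throughout I write $w_{g,h}(x):=\inf_{y\in Kx}\{f(x,y)+g(x)+h(y)\}=g(x)+\inf_{y\in Kx}\{f(x,y)+h(y)\}$. By Remark \ref{basic_remark} and Theorem \ref{Berge}, each $w_{g,h}$ is proper, continuous and bounded above on $X$.

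\emph{(a) Density.} Fix $(g,h)$ and $\eps>0$. By Remark \ref{basic_remark}, Lemma \ref{basic_lemma} applies to $f+g+h$. Choose $x_0$ with $w_{g,h}(x_0)>\sup w_{g,h}-\eps$; then $w_{g,h}(x_0)$ is finite. Choose $y_0\in Kx_0$ that is $\eps$-optimal for $\inf_{y\in Kx_0}\{f(x_0,y)+g(x_0)+h(y)\}$. The lemma gives $g_0,h_0$ of norm at most $\eps$ such that $(x_0,y_0)\in S_f(g-g_0,h+h_0)$. Hence $\Dom(S_f)$ is dense.

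\emph{(b) Closed graph.} Take a net $(g_\alpha,h_\alpha,x_\alpha,y_\alpha)\to(g,h,x,y)$ with $(x_\alpha,y_\alpha)\in S_f(g_\alpha,h_\alpha)$.
\begin{enumerate}
\item[1.] $y\in Kx$, because $\Gr(K)$ is closed: $K$ is usc with compact (hence closed) values in a Hausdorff space.
\item[2.] Uniform convergence gives $|w_{g_\alpha,h_\alpha}-w_{g,h}|\le\|g_\alpha-g\|+\|h_\alpha-h\|\to0$ uniformly, so $\sup w_{g_\alpha,h_\alpha}\to\sup w_{g,h}$.
\item[3.] From $w_{g_\alpha,h_\alpha}(x_\alpha)=\sup w_{g_\alpha,h_\alpha}$ and continuity of $w_{g,h}$ we get $w_{g,h}(x)=\sup w_{g,h}$, which is finite by (A1).
\item[4.] Continuity of $f$ (A2$'$), together with uniform convergence and continuity of $g,h$, yields $f(x_\alpha,y_\alpha)+g_\alpha(x_\alpha)+h_\alpha(y_\alpha)\to f(x,y)+g(x)+h(y)$. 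The left side equals $w_{g_\alpha,h_\alpha}(x_\alpha)\to w_{g,h}(x)$, so $y$ attains the inner infimum at $x$. Extended values cause no trouble since all limits are finite.
\end{enumerate}

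\emph{(d) Openness} (done before (c), since (c) uses it). Let $(x_0,y_0)\in S_f(g,h)$, let $U$ be a neighbourhood of $(g,h)$, and let $W\subset X$, $V\subset Y$ be neighbourhoods of $x_0$, $y_0$. I must show that $S_f(U)\supset (W\times V)\cap\Gr(K)$ for suitable $W,V$. Take $(x_1,y_1)\in\Gr(K)$ close to $(x_0,y_0)$.
\begin{enumerate}
\item[1.] Using Theorem \ref{Berge} and continuity of $f$, choose $W,V$ so small that $w_{g,h}(x_1)>\sup w_{g,h}-\eps$ and $f(x_1,y_1)+h(y_1)<\inf_{Kx_1}\{f(x_1,\cdot)+h\}+\eps$. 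The second bound needs $x\mapsto\inf_{Kx}\{f(x,\cdot)+h\}$ continuous (Berge again) and $f(x_1,y_1)\approx f(x_0,y_0)$, which is finite and equals the optimal value at $x_0$.
\item[2.] Apply Lemma \ref{basic_lemma} at $(x_1,y_1)$ to get $(x_1,y_1)\in S_f(g-g_0,h+h_0)$ with $\|g_0\|,\|h_0\|\le\eps$.
\end{enumerate}
The main obstacle is the care in step 1 with finiteness of $f$ near $(x_0,y_0)$; openness of $\dom f$ handles this.

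\emph{(c) Quasi-continuity.} Let $U$ be open and $V$ open with $S_f(U)\cap V\neq\emptyset$, say $(x_0,y_0)\in S_f(g,h)\cap V$ with $(g,h)\in U$. I want an open $U'\subset U$ with $S_f(U')\subset V$. Strategy: make $(x_0,y_0)$ a strict, well-posed solution by a small perturbation, then use the uniform stability of (b) to keep the solutions in $V$.
\begin{enumerate}
\item[1.] Use complete regularity to pick continuous functions $\varphi:X\to[0,1]$ and $\psi:Y\to[0,1]$ with $\varphi(x_0)=0=\psi(y_0)$, and with $\varphi=1$ off $W$ and $\psi=1$ off $V'$, where $W\times V'\subset V$.
\item[2.] For the perturbation $(g-\eta\varphi,\,h+\eta\psi)$, the point $x_0$ stays a maximizer. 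Any $x\notin W$ has value at most $\sup w_{g,h}-\eta$.
\item[3.] For a pair $(g',h')$ within $\eta/4$ of this perturbation, the value at $x$ differs by less than $\eta/2$. So any solution $x'$ must lie in $W$.
\item[4.] For $y$, the difficulty is that at $x'\neq x_0$ the inner minimizers may leave $V'$. To control this, use the continuity of $x\mapsto\inf_{Kx}(f+h)$ and restrict to an $x$-neighbourhood where the $\eta$-penalty from $\psi$ forces inner minimizers into $V'$.
\end{enumerate}
I expect this last uniformity in $x$ to be the delicate step. If it fails, I would instead use the result of (d) combined with (b) and the compactness of $Kx$.
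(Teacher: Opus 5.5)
Your arguments for (a), (b) and (d) are correct and follow the paper's proof essentially step by step. The gap is in (c), in two places. Once both are repaired, your outline becomes the paper's argument.

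\emph{Step 2 is false as stated.} Since $\psi\ge0$, adding $\eta\psi$ to $h$ can raise the inner infimum at a point $x$ by as much as $\eta$. It is raised by exactly $\eta$ when $Kx\cap V'=\emptyset$, and this cancels the penalty $-\eta\varphi(x)=-\eta$ off $W$. All you get there is $w_{g-\eta\varphi,h+\eta\psi}(x)\le w_{g,h}(x)\le\sup w_{g,h}$, with no gap. Concretely, let $X=\{a,b\}$ and $Y=\{c,d\}$ be discrete, $Ka=\{c\}$, $Kb=\{d\}$, $f\equiv0$, $(g,h)=(0,0)$, $(x_0,y_0)=(a,c)$, $W=\{a\}$, $V'=\{c\}$. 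The perturbed values at $a$ and $b$ are both $0$. Raising $g$ by $\eta/8$ at $b$ (well inside your $\eta/4$-ball) makes $(b,d)\notin W\times V'$ the unique solution, so step 3 fails as well. The $x$-penalty has to strictly dominate the $y$-penalty plus the radius of the final ball. This is why the paper pairs $-7\eps p_0$ with $+\eps q_0$ and uses radius $\eps/10$. Your claim that $x_0$ stays a maximizer is also not true in general, but you do not need it: $w_{g-\eta\varphi,h+\eta\psi}(x_0)=\sup w_{g,h}$ is enough.

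\emph{Step 4 is missing the idea that localizes $\bar y$.} Continuity of $x\mapsto\inf_{Kx}\{f(x,\cdot)+h\}$ only gives a lower bound, roughly $f(x_0,y_0)+h(y_0)+\eta$, for the penalized cost at points $y\in Kx$ with $\psi(y)=1$. To exclude such $y$ as inner minimizers you also need an upper bound for the penalized inner infimum at $x$. That is, you need a competitor $\tilde y\in Kx$ with $\psi(\tilde y)<1/2$ and $f(x,\tilde y)+h(\tilde y)$ close to $f(x_0,y_0)+h(y_0)$. This is where two hypotheses enter, as in the paper's (4)--(6): lower semicontinuity of $K$ at $x_0$, applied to the open set $\{y\in V':\psi(y)<1/2\}$, and joint continuity of $f$ at $(x_0,y_0)$. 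Alternatively, you can apply Berge's theorem to $f+h+\eta\psi$, whose inner value at $x_0$ is still $f(x_0,y_0)+h(y_0)$.

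This also fixes the order of the construction, which your step 1 gets wrong by choosing $\varphi$ first. You must take $\psi$ first. Then take a neighbourhood $U_1$ of $x_0$ on which such a competitor exists for every $x$. Only then choose $\varphi$ with $\varphi=1$ off $U_1$, so that the $x$-localization puts $\bar x$ exactly where the competitor is available. The fallback via (d), (b) and compactness of $Kx$ is not an argument, and it is not needed.
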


\begin{proof}

As mentioned in Theorem 2 from \cite{GKaR} condition (a) is a direct consequence of Lemma \ref{basic_lemma},  having in mind also Remark \ref{basic_remark} (and this is true only by assuming the assumptions (A1)-(A3)).  

\medskip

To prove (b), suppose that we have nets $\{(g_\alpha,h_\alpha)\}_{\alpha \in \Lambda}\subset C(X)\times C(Y)$ and $\{(x_\alpha,y_\alpha)\}_{\alpha\in \Lambda}\subset X\times Y$ such that $\{(g_\alpha,h_\alpha)\}_{\alpha \in \Lambda}$ converges (uniformly) to $(g_0,h_0)\in C(X)\times C(Y)$, $\{(x_\alpha,y_\alpha)\}_{\alpha\in \Lambda}$ converges to $(x_0,y_0)\in X\times Y$ and we have $(x_\alpha,y_\alpha)\in S_f(g_\alpha,h_\alpha)$ for every $\alpha \in \Lambda$. The latter means that, for every $\alpha \in \Lambda$,  $y_\alpha\in Kx_\alpha$ and, moreover 
$$
\begin{array}{ll} \vspace{6pt}
f(x_\alpha,y_\alpha)+g_\alpha(x_\alpha)+h_\alpha(y_\alpha)&=\inf_{y\in Kx_\alpha}\{f(x_\alpha,y)+g_\alpha(x_\alpha)+h_\alpha(y)\} \\ \vspace{6pt}
&=\sup_{x\in X}\inf_{y\in Kx}\{f(x,y)+g_\alpha(x)+h_\alpha(y)\} \\ \vspace{6pt}
&=v_{f+g_\alpha+h_\alpha}.
\end{array}
\leqno{(1)}
$$
\noindent 
We have to show that $(x_0,y_0)\in S_f(g_0,h_0)$. 

First, since $K$ is usc with nonempty compact values, it has a closed graph in $X\times Y$, and consequently, $y_0\in Kx_0$. Further,
observe that condition (A1) implies that for any function $f+g+h$ where $g\in C(X)$ and $h\in C(Y)$, when determining the value of the problem $v_{f+g+h} =\sup_{x\in X} \inf_{y\in Kx}\{f(x,y)+g(x)+h(y)\}$, the supremum can be taken only on $x$ in the domain of the function $\inf_{y\in K(\cdot)}\{f(\cdot,y)+g(\cdot)+h(y)\}$ and the infimum only on those $y\in Kx$ for which the value $f(x,y)$ is finite (such an $y\in Kx$ always exists according to (A1) and the choice of $x$). That is, the value of the supinf problem for $f+g+h$ depends only on finite values of $f$. 

Having in mind the last observation, it is easily seen that,  because of the uniform convergence of  $(g_\alpha,h_\alpha)$  to $(g_0,h_0)$,  the value  $v_{f+g_\alpha+h_\alpha}$ of the supinf problem for the function $f+g_\alpha+h_\alpha$ converges with $\alpha$ to the value $v_{f+g_0+h_0}$ of the supinf problem for the function $f+g_0+h_0$. Since, according to (A1), the latter value is finite, having in mind (1), and again the uniform convergence of $(g_\alpha,h_\alpha)$ to $(g_0,h_0)$ and the convergence of $(x_\alpha,y_\alpha)$ to $(x_0,y_0)$,  this implies that the value $f(x_0,y_0)$ is finite and that 

$$
\begin{array}{ll} \vspace{6pt}
f(x_0,y_0)+g_0(x_0)+h_0(y_0)&=\lim_\alpha\{f(x_\alpha,y_\alpha)+ g_\alpha(x_\alpha)+h_\alpha(y_\alpha)\}\\ \vspace{6pt}
&=\lim_\alpha v_{f+g_\alpha+h_\alpha}=v_{f+g_0+h_0}.
\end{array}
\leqno{(2)}
$$
Moreover, again using the uninform convergence of $(g_\alpha,h_\alpha)$ to $(g_0,h_0)$, the functions $\inf_{y\in K(\cdot)}\{f(\cdot,y)+ g_\alpha(\cdot) +h_\alpha(y)\}$ (which are continuous according to Berge Theorem \ref{Berge}) uniformly converges (on their  common domain) with $\alpha$ to the continuous function $\inf_{y\in K(\cdot)}\{f(\cdot,y)+ g_0(\cdot) +h_0(y)\}$. Using this, the convergence of $(x_\alpha,y_\alpha)$ to $(x_0,y_0)$, and having in mind also (1)  and (2), we have
 $$
\begin{array}{ll} \vspace{6pt}
f(x_0,y_0)+g_0(x_0)+h_0(y_0)&=\lim_\alpha\{f(x_\alpha,y_\alpha)+ g_\alpha(x_\alpha)+h_\alpha(y_\alpha)\}\\ \vspace{6pt}
&=\lim_\alpha \inf_{y\in K x_\alpha} \{f(x_\alpha,y)+g_\alpha(x_\alpha)+h_\alpha(y)\} \\
& = \inf_{y\in Kx_0}\{f(x_0,y)+g_0(x_0)+h_0(y)\},
\end{array}
$$
which shows, together with  (2), that $(x_0,y_0)$ is a solution of the supinf problem for the function $f+g_0+h_0$, and this completes the proof that the solution mapping $S_f$ has closed graph.

\medskip 

Let us now  prove (c). Take a nonempty open set $W$ of $C(X)\times C(Y)$, and open sets $U$ of $X$ and $V$ of $Y$, such that  $S_f(W)\cap (U\times V)\neq\emptyset$. We have to find a nonempty open set $W'$ of $C(X)\times C(Y)$ such that $W'\subset W$,  and moreover, $S_f(W')\subset U\times V$. To this end, let $(g_0,h_0)\in W$ and $(x_0,y_0)\in X\times Y$ be such that $(x_0,y_0)\in S_f(g_0,h_0)\cap (U\times V)$.
In order  to shorten the length of the expressions below, we can use  Remark \ref{basic_remark} and the simple observation that $S_f(g_1+g_2, h_1+ h_2)=S_{f+g_1+h_1}(g_2,h_2)$ for any $g_1,g_2\in C(X)$ and $h_1,h_2\in C(Y)$,   to suppose, without loss of generality, that $g_0\equiv 0$ and $h_0\equiv 0$. In this case $f(x_0,y_0)=\inf_{y\in Kx_0} f(x_0,y)=\sup_{x\in X} \inf_{y\in Kx} f(x,y)$.

Let $\eps>0$ be such that
$$
B_{C(X)}(\theta_X,8\eps)\times B_{C(Y)}(\theta_Y, 8\eps)\subset W,
\leqno{(3)}
$$
where $\theta_X$ and $\theta_Y$ are the origins in $C(X)$ and $C(Y)$ respectively. 

Let further $U_1'$ and $V_1'$ be nonempty open sets in $X$ and $Y$ correspondingly,  such that $x_0\in U_1'\subset \overline{U}_1'\subset U$,  $y_0\in V_1'\subset \overline{V}_1'\subset V$ and which satisfy, in addition, the following conditions:
 $$
 \vert f(x,y)-f(x_0,y_0)\vert <\eps/10 \mbox{ for every } (x,y)\in U_1'\times V_1',
 \leqno{(4)}
 $$
 and 
 $$
 \vert \inf_{y\in Kx} f(x,y)-\inf_{y\in Kx_0} f(x_0,y)\vert <\eps/10 \mbox{ for every } x\in U_1'.
 \leqno{(5)}
 $$
The property in (4) is possible because of the continuity of $f$ at $(x_0,y_0)$ and the fact that $f(x_0,y_0)$ is finite, while  (5) follows by  Berge theorem  (Theorem \ref{Berge}), again taking into account that $f(x_0,y_0)=\inf_{y\in Kx_0} f(x_0,y)$ is finite. 
  
Let, further, $q_0\in C(Y)$ be such that $0\le q_0(y)\le 1$, for any $y\in Y$,  $q_0(y_0)=0$ and $q_0|_{Y\setminus V_1'}\equiv 1$. Consider the open (in $Y$) set  $V_1'':=\{y\in V_1': q_0(y)<1/2\}$. Since $y_0\in K{x_0}\cap V_1''$ and $K$ is lsc, there exists  a nonempty open set $U_1$ in $X$ such that $x_0\in U_1\subset \overline{U}_1\subset U_1'$ for which  
$$
Kx\cap V_1''\neq \emptyset \mbox{ for every } x\in \overline{U}_1.
\leqno{(6)}
$$
Let $p_0\in C (X)$ be such that $0\le p_0(x)\le 1$ for $x\in X$, $p_0(x_0)=0$ and $p_0|_{X\setminus U_1}\equiv 1$. 
Set $V_1:=V_1'$. 

Let us consider the couple  $(-7\eps p_0, \eps q_0)\in C(X)\times C(Y)$. Then, using also (3), it is seen that the nonempty open (in $C(X)\times C(Y)$) set $W'$ defined below satisfies
$$
W':=B_{C(X)}(-7\eps p_0,\eps/10)\times B_{C(Y)}(\eps q_0,\eps/10)\subset W.
$$
We will show that $S_f(W')\subset \overline{U}_1\times \overline{V}_1$ and this will complete the proof of the assertion because $\overline{U}_1\times \overline{V}_1 \subset U\times V$ . To see this, let $(g,h)\in W'$ and let $(\bar x,\bar y)\in S_f(g,h)$. Then,  $\bar y\in K\bar x$ and we have 
$$
\begin{array}{ll}  \vspace{6pt}
f(\bar x, \bar y)+g(\bar x)+h(\bar y) & = \inf_{y\in K\bar x}\{f(\bar x,y)+g(\bar x)+h(y)\} \\  \vspace{6pt}
&  =\sup_{x\in X} \inf_{y\in Kx} \{f(x,y)+g(x)+h(y)\}.
\end{array}
\leqno{(7)}
$$
We know also that
$$
f(x_0,y_0)=\inf_{y\in Kx_0} f(x_0,y)=\sup_{x\in X} \inf_{y\in Kx} f(x,y)
\leqno{(8)}
$$
because initially $(x_0,y_0)\in S_f(\theta_X,\theta_Y)$ (in particular, we have also $y_0\in Kx_0)$.

Recall that 
$$
w(x):=\inf_{y\in Kx} f(x,y), \, \, x\in X,
$$
and set 
$$
w'(x):=\inf_{y\in Kx} \{f(x,y)+g(x)+h(y)\}, \, \, x\in X.
$$

We may also write that $g=-7\eps p_0+g'$ and $h= \eps q_0+h'$ with some $g'\in C(X)$, $h'\in C(Y)$ such that $\Vert g'\Vert_{X,\infty}<\eps/10$ and $\Vert h'\Vert_{Y,\infty}<\eps/10$.

Suppose first that $\bar x\notin \overline{U}_1$. Then, $p_0(\bar x)=1$ and taking into account also the second equality in (8), the fact that $p_0(x_0)=0$  and the choices of the functions above, we have
$$
\begin{array}{ll}  \vspace{6pt}
w'(\bar x) & =\inf_{y\in K\bar x} \{f(\bar x,y)+g(\bar x) +h(y)\} \\  \vspace{6pt}
& =\inf_{y\in K\bar x}\{ f(\bar x,y) -7\eps p_0(\bar x)+ g'(\bar x) +\eps q_0(y)+ h'(y)\}\\  \vspace{6pt}
& \le \inf_{y\in K\bar x} \{ f(\bar x,y)- 4 \eps\}=\inf_{y\in K\bar x} f(\bar x,y)-4\eps \\  \vspace{6pt}
& = w(\bar x)-4\eps\le  w(x_0)-4 \eps \\  \vspace{6pt}
& =\inf_{y\in Kx_0} f(x_0,y)-4\eps\le \inf_{y\in Kx_0} \{ f(x_0,y)+g(x_0)\}-3\eps \\  \vspace{6pt}
& < \inf_{y\in Kx_0} \{ f(x_0,y)+ g(x_0)+h(y)\}-\eps \\  \vspace{6pt}
&=w'(x_0)-\eps <w'(x_0). 
\end{array}
$$
Therefore, $w'(\bar x)<w'(x_0)$ and this is a contradiction, since $w'(\bar x)=\sup_{x\in X} w'(x)$. Thus, $\bar x\in \overline{U}_1$.

Further, we show that $\bar y\in \overline{V}_1$. Suppose, on the contrary, that $\bar y\notin \overline{V}_1$. Then $q_0(\bar y)=1$. Since $\bar x\in \overline{U}_1$ by (6) there is $\tilde y\in K\bar x\cap V_1''\subset K \bar x\cap V_1$. Then, having in  mind also (7), (4), (5)  and the choice of $q_0$,  we have
$$
\begin{array}{ll} \vspace{6pt}
\inf_{y\in K\bar x} \{ f(\bar x,y)+ g(\bar x)+ h(y)\} &=f(\bar x,\bar y)+g(\bar x)+ h(\bar y )\\  \vspace{6pt}
& \ge \inf_{y\in K \bar x} f(\bar x,y) +g(\bar x) +\eps q_0(\bar y)+ h'(\bar y) \\  \vspace{6pt}
&\ge \inf_{y\in Kx_0} f(x_0,y)-\eps/10 +g(\bar x)+\eps-\eps/10 \\ \vspace{6pt}
& =f(x_0,y_0) +g(\bar x)+\eps-2\eps/10 \\ \vspace{6pt}
& \ge f(\bar x, \tilde y)+g(\bar x)+\eps-3\eps/10 \\ \vspace{6pt}
& > f(\bar x, \tilde y)+g(\bar x) +\eps q_0(\tilde y)-\eps/2 +\eps-3\eps/10 \\ \vspace{6pt}
& \ge f(\bar x, \tilde y)+g(\bar x) +h(\tilde y)-\eps/10 +\eps-8\eps/10 \\ \vspace{6pt}
& = f(\bar x, \tilde y)+g(\bar x) +h(\tilde y)+\eps/10 \\ \vspace{6pt}
& >  f(\bar x, \tilde y)+g(\bar x) +h(\tilde y)
\end{array}
$$ 
and this is a contradiction since $\tilde y\in K\bar x$. Therefore, $\bar y\in \overline{V}_1$, which entails that $S_f(g,h)\in \overline{U}_1\times \overline{V}_1\subset U\times V$ and this completes the proof that the mapping $S_f$ is quasi-continuous.

\medskip
It remains to prove (d). Let $O$ be a nonempty open subset of $C(X)\times C(Y)$. We know from the assertion (a)  above that $S_f(O)$ is nonempty. Take an arbitrary point $(x_0,y_0)\in S_f(O)$. Then for some $(g_0,h_0)\in O$ we have $(x_0,y_0)\in S_f(g_0,h_0)$ which means that 
$$
\begin{array}{ll} \vspace{6pt}
f(x_0,y_0)+g_0(x_0)+h_0(y_0)&=\inf_{y\in Kx_0}\{f(x_0,y)+g_0(x_0)+h_0(y)\} \\  \vspace{6pt}
 & = \sup_{x\in X} \inf_{y\in Kx}\{f(x,y)+g_0(x)+h_0(y)\}.
 \end{array}
 \leqno{(9)}
$$
Since $O$ is open in $C(X)\times C(Y)$, there is $\eps>0$ such that 
$$
B_{C(X)}(g_0, 2\eps)\times B_{C(Y)}(h_0,2\eps)\subset O.
$$
 Let $U$ and $V$ be open subsets of  $X$ and $Y$ respectively, such that  $x_0\in U$, $y_0\in V$ and we have, in addition, 
$$
\vert f(x,y)+g_0(x)+h_0(y)-f(x_0,y_0)-g_0(x_0)-h_0(y_0)|<\eps \,   \forall  (x,y)\in U\times V,
\leqno{(10)}
$$
and
$$
\vert \inf_{y\in Kx} \{f(x,y)+g_0(x)+h_0(y)\}- \inf_{y\in Kx_0}\{f(x_0,y)+g_0(x_0)+h_0(y)\} \vert < \eps \, \forall x\in U.
\leqno{(11)}
$$
The property in (10) is possible because of the continuity of $f$, $g$ and $h$, and the fact that $f$ is finite at $(x_0,y_0)$, and the property in (11) because of Berge theorem and the fact that $\inf_{y\in K(\cdot)}\{f(\cdot,y)+g_0(\cdot)+h_0(y)\}$ is also finite at $x_0$. We may suppose  also that $Kx\cap V\neq\emptyset$ for every $x\in U$, because of the lower semicontinuity of $K$.

Take now an arbitrary couple $(\bar x,\bar y)\in U\times V$, such that $\bar y\in K\bar x$. Then, by (11) we have 
$$
\inf_{y\in K\bar x} \{f(\bar x,y)+g_0(x)+h_0(y)\}> \inf_{y\in Kx_0}\{f(x_0,y)+g_0(x_0)+h_0(y)\}-\eps
$$
which together with the second equality in (9) show that the point $\bar x$ satisfies the first condition of Lemma \ref{basic_lemma} for the objective function $f+g_0+h_0$. On the other hand, by (10) and the first equality in (9) we deduce that
$$
\begin{array}{ll} \vspace{6pt}
f(\bar x,\bar y)+g_0(\bar x)+h_0(\bar y) & < f(x_0,y_0)+g_0(x_0)+h_0(y_0)+\eps \\ \vspace{6pt}
&= \inf_{y\in Kx_0}\{f(x_0,y)+g_0(x_0)+h_0(y)\} +\eps,
\end{array}
$$
which shows that $\bar  y$ satisfies the second condition in Lemma \ref{basic_lemma} with $\delta=\eps$, again for the function $f+g_0+h_0$. Thus, having in mind Remark \ref{basic_remark}, there exists $(\bar g,\bar h)\in C(X)\times C(Y)$ with  $\Vert \bar g\Vert_{X,\infty}  \le \eps$,  $\Vert \bar h\Vert_{Y,\infty}  \le \eps$, such that $(\bar x,\bar y)$ is a solution for the supinf problem with objective function $f+g_0+h_0-\bar g+\bar h$. Because of the initial choice of $\eps$ we obviously have $(g_0-\bar g,h_0+ \bar h)\in O$ and therefore,  $(\bar x,\bar y)\in S_f(O)$. Since $(\bar x,\bar y)$, with $\bar y\in K\bar x$, was arbitrary taken from $U\times V$, the latter shows that $(U\times V)\cap \Gr(K)\subset S_f(O)$. This completes the proof of $(d)$ and of the proposition.

\end{proof}

When making perturbations of a given function, in order to obtain desirable good properties for the perturbed function (like above, existence of a solution) the general idea is to deal with a  family of perturbations which are as simple as possible.  From that point of view our interest so far have been concentrated on perturbations of a function of two variables by  additively separable continuous bounded functions. The latter  are easier to handle than merely a continuous bounded function of two variables. But when the question is also to study the structure of the set of good perturbations, it is also of interest to study perturbation spaces which are larger. More precisely, one can address from this point of view also the question of perturbations of a given function $f:X\times Y\to [-\infty,+\infty]$, which satisfies, together with a set-valued mapping $K:X\tos Y$, the conditions (A1)-(A3) (or (A1)-(A2$'$)), which are elements of the family $C(X\times Y)$ of all continuous bounded functions in $X\times Y$ equipped with the usual sup norm $\Vert u\Vert _{X\times Y,\infty}:=\sup\{\vert u(x,y)\vert: (x,y)\in X\times Y\}$, $u\in C(X\times Y)$. This question was thoroughly studied for the special case of the function $f\equiv 0$ in the paper of Kenderov and Lucchetti \cite{KL}. 

Having this new setting, the corresponding solution mapping 
$$
\tilde S_f: C(X\times Y)\tos X\times Y
$$ 
is defined in an obvious way: $\tilde S_f$ assigns to any $u\in C(X\times Y)$ the set (possibly empty) of the solutions to the supinf problem for the function $f+u$, with the set-valued mapping $K$.

It turns out  (with essentially the same proof) that this mapping share the same properties as the mapping $S_f$, namely we have:

\begin{prop}
Let $X$ and $Y$ be completely regular topological spaces,  and a given   extended real-valued function   $f:X\times Y\to [-\infty, +\infty]$ and a set-valued mapping $K:X\tos Y$ with nonempty images satisfy the conditions (A1)-(A2\,$'$). Suppose, in addition, that  $K$ is both lsc and usc in $X$ and with compact values. Then, the solution mapping $\tilde S_f:C(X\times Y)\tos X\times Y$  is with dense domain $Dom(\tilde S_f)$, closed graph $\Gr(\tilde S_f)$, quasi-continuous and open  as a mapping from $C(X\times Y)$ into $\Gr(K)$. 
\end{prop}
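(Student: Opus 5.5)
The plan is to transfer each part of the proof of Proposition \ref{solution_map_1} to the larger perturbation space. The key tool is the continuous linear isometric embedding $j: C(X)\times C(Y)\to C(X\times Y)$, $j(g,h)(x,y):=g(x)+h(y)$ (isometric up to the constant coming from the chosen product norm). It satisfies $S_f(g,h)=\tilde S_f(j(g,h))$. We also use the translation identity $\tilde S_f(u_1+u_2)=\tilde S_{f+u_1}(u_2)$. By Remark \ref{basic_remark}, $f+u_1$ again satisfies (A1)--(A2$'$), and Lemma \ref{basic_lemma} and Theorem \ref{Berge} apply to it.

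\emph{Density of the domain.} Given $u\in C(X\times Y)$ and $\eps>0$, apply Lemma \ref{basic_lemma} to $f+u$. This gives $g_0,h_0$ of norm at most $\eps$ with $(x_0,y_0)\in \tilde S_f(u-g_0+h_0)$. Hence $\Dom(\tilde S_f)$ is dense.

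\emph{Openness into $\Gr(K)$.} Take $O$ open with $(x_0,y_0)\in\tilde S_f(u_0)$, $u_0\in O$, and choose neighbourhoods $U,V$ as in (10)--(11) with $f+u_0$ in place of $f+g_0+h_0$; continuity of $u_0$ and Theorem \ref{Berge} applied to $f+u_0$ make this possible. Lemma \ref{basic_lemma} then yields a separable correction $-\bar g+\bar h$ with $u_0-\bar g+\bar h\in O$ solving at any $(\bar x,\bar y)\in (U\times V)\cap\Gr(K)$. This is verbatim the argument for part (d).

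\emph{Closed graph.} Repeat the net argument of (b). Uniform convergence $u_\alpha\to u_0$ gives $v_{f+u_\alpha}\to v_{f+u_0}$, since $|v_{f+u}-v_{f+u'}|\le\Vert u-u'\Vert$. It also gives uniform convergence of $\inf_{y\in K(\cdot)}\{f+u_\alpha\}$ to $\inf_{y\in K(\cdot)}\{f+u_0\}$. Joint continuity of $u_0$ and $u_\alpha(x_\alpha,y_\alpha)-u_0(x_\alpha,y_\alpha)\to0$ give $u_\alpha(x_\alpha,y_\alpha)\to u_0(x_0,y_0)$. Continuity of $\inf_{y\in K(\cdot)}(f+u_0)$ (Theorem \ref{Berge}) and closedness of $\Gr(K)$ then conclude as before.

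\emph{Quasi-continuity.} I expect this to be the only place requiring care. After translating so that $u_0=0$ and $(x_0,y_0)\in\tilde S_f(0)\cap(U\times V)$, I will take $W':=B(j(-7\eps p_0,\eps q_0),\eps/10)\subset W$, with $p_0,q_0,U_1,V_1$ constructed exactly as in (4)--(6). Any $u\in W'$ can be written $u=-7\eps p_0(x)+\eps q_0(y)+u'(x,y)$ with $\Vert u'\Vert<\eps/10$. The two chains of inequalities in (c) used $g'$ and $h'$ only through the bounds $|g'|,|h'|<\eps/10$, so $u'$ plays the combined role of $g'+h'$. In particular, the comparison of $\bar y$ with $\tilde y$ at the same $\bar x$ now involves $u'(\bar x,\bar y)-u'(\bar x,\tilde y)$ instead of $h'(\bar y)-h'(\tilde y)$. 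This difference is still less than $2\eps/10$, so the margin $\eps/10$ survives once the constants are rechecked, shrinking $\eps/10$ if needed. With that check done, $\tilde S_f(W')\subset\overline U_1\times\overline V_1\subset U\times V$.
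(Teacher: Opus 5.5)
Your proposal is correct and follows essentially the paper's own sketched proof. Density and openness come from Lemma \ref{basic_lemma} applied to $f+u$ together with the bound $\Vert g+h\Vert_{X\times Y,\infty}\le\Vert g\Vert_{X,\infty}+\Vert h\Vert_{Y,\infty}$, the closed graph from the net argument of (b), and quasi-continuity from the construction of (c) with $u'$ absorbing $g'+h'$; there the constants in fact go through unchanged, because the $\bar y$-versus-$\tilde y$ comparison uses only $u'(\bar x,\bar y)\ge-\eps/10$ and $u'(\bar x,\tilde y)<\eps/10$, exactly as for $h'$. One small correction: $j$ is not an isometric embedding (it is not even injective, since $j(c,-c)=0$ for constants $c$), but you only ever use the norm bound above, so nothing in your argument depends on this.
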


\begin{proof}\label{solution_map_2}
We only sketch the proof. The fact that $\Dom(\tilde S_f)$ is dense in $C(X\times Y)$ follows by the already mentioned fact that Lemma \ref{basic_lemma} holds also for any function of the type $f+u$, $u\in C(X\times Y)$, and the simple observation that for any $g\in C(X)$ and $h\in C(Y)$ we have
$$
\Vert g+h\Vert_{X\times Y,\infty} \le \Vert g\Vert_{X,\infty}+\Vert h\Vert_{Y,\infty}.
\leqno{(*)}
$$
Closedness of the graph if $\tilde S_f$ is proved following the path of the proof of the same fact as in Proposition \ref{solution_map_1} (we used there only general facts from uniform convergence of continuous bounded functions). The fact that $\tilde S_f$ is open as a mapping into $\Gr(K)$ is again proved as (d) above, having in mind (*). The proof of quasi-continuity of $\tilde S_f$ needs more careful consideration, but still it follows the same path as the proof of the same fact above, again having in mind (*).

\end{proof}

As it was mentioned above, the special case when $f\equiv 0$ was considered in \cite{KL} and similar properties of the solution mapping $\tilde S_f$ were obtained. The general case of a function $f$, which satisfies (A1)-(A2$'$), requires more attention.

\smallskip

We end this section by pointing out that when we have to deal with one level optimization, i.e. the objective function $f$ is only defined in the space $X$ and the perturbations are from $C(X)$, the corresponding solution mapping have the same properties as the solution mapping from Proposition \ref{solution_map_1} (see e.g. Proposition 2.4 from \cite{CKR3}). These properties can be derived from  Proposition  \ref{solution_map_1} (under its conditions) by supposing that the space $Y$ is a singleton. However, it should be noted that, in the case of one level optimization, in order to have the above properties for the corresponding solution mapping, the continuity of $f$ is needed only for condition (d) above. For the other properties (a)-(c) (as shown in Proposition 2.4 from \cite{CKR3}), the semicontinuity of the objective function  is enough.

\section{Generic variational principles for supinf problems}

In this section, for given completely regular topological spaces $X$ and $Y$,  an extended real-valued function $f:X\times Y\to [-\infty,\infty]$ and a  set-valued mapping $K:X\tos Y$ with nonempty images, we will be interested in under which conditions the set of the perturbations of $f$ from $C(X)\times C(Y)$ (or from $C(X\times Y)$) which secure existence of solutions to the perturbed supinf problem is not only dense (we have proved that under the conditions (A1)-(A3), the sets  $\Dom(S_f)$ and $\Dom(\tilde S_f)$ are dense in the corresponding spaces) but also residual in $C(X)\times C(Y)$ (or in  $C(X\times Y)$. The term {\it residual} for the set $\Dom(S_f)$ (or $\Dom(\tilde S_f)$), as usual,  means that $\Dom(S_f)$ (or $\Dom(\tilde S_f)$) contains a dense  and $G_\delta$-subset of $C(X)\times C(Y)$ (or of $C(X\times Y)$). From topological point of view, residual sets in complete metric spaces are considered to contain most of the elements from the space.

More precisely, we will be interested in conditions which ensure the above property not only for the of existence of solutions but also for  a stronger property  for the corresponding supinf problems called {\it well-posedness}. 

Let us first recall this notion in the case of optimization of a function of one variable. Given an  extended real-valued function $\varphi:Z\to\R\cup\{+\infty\}$ defined in a topological space $Z$ which is bounded from below, the problem to minimize $\varphi$ on $Z$   is said  to be  {\it Tykhonov well-posed} (or sometimes, simply, {\it well-posed}) if this problem has a unique minimum point  $z_0 \in Z$ and, moreover, for every minimizing sequence $\{z_n\}_{n=1}^{\infty} \subset  Z$ for this problem, i.e. $\varphi(z_n) \rightarrow \varphi(z_0)$, it follows that $z_n \rightarrow z_0$. An equivalent way to define the latter notion is: {\it every minimizing sequence for $\varphi$ converges to some minimizer of $\varphi$}. Symmetrically, we define this notion for maximization of a given bounded from above extended real-valued function \\ $\varphi:Z\to\R\cup\{-\infty\}$, by replacing minimum by maximum and minimization sequence by maximization sequence. 

This notion have significant importance both, in numerical optimization because it secures convergence of numerical algorithms for solving optimization problems, and also in variational and functional analysis (where it is known also under the name {\it the function $\varphi$  has a strong minimum (or maximum)}), where it is related, for example,  to the differentiability of certain classes of functions. The notion has been largely studied in many papers - see, e.g., the monographs \cite{DoZo} and \cite{DGoZ2} and the references therein. 

We will now introduce similar notions for sup-inf problems proposed in the paper \cite{KL}. Let be given an extended real-valued function $f:X\times Y\to [-\infty,\infty]$ which is proper and a  set-valued mapping $K\tos Y$ with nonempty images. We consider the constrained supinf problem for these data, namely,
$$
(P) \qquad \sup_{x \in X} \inf_{y \in Kx} f(x,y).
$$

In our setting, in order to introduce the notions proposed in \cite{KL}, we need to impose since the beginning, that the data of the problem satisfy condition (A1). Having done this, the first notion is related only to the first player (which somewhat reflects the role of this player in the leader-follower game): a point $x_0\in X$ is called  a solution for the leader player (called also {\it sup-solution}) if $v_f=\inf_{y\in Kx_0} f(x_0,y)$. The problem $(P)$ is said to be {\it sup-well-posed} if the optimization problem to maximize the function $w(\cdot):=\inf_{y\in K(\cdot)}f(\cdot, y)$ in $X$ is Tykhonov well-posed. In this case, obviously,  there is only one sup-solution.

A stronger notion of well-posedness relies on the following type  of optimizing sequences for $(P)$: A sequence $\{(x_n, y_n)\}_{n\ge 1} \in X \times Y$ is called {\it optimizing} for $(P)$ if:

\begin{enumerate}
  \item $y_n \in K x_n$ for every $n\ge 1$;
  \item $w(x_n)=\inf_{y\in Kx_n} f(x_n,y) \rightarrow v_f=\sup_{x \in X} \inf_{y \in Kx} f(x,y)$;
  \item $f(x_n,y_n) \rightarrow v_f$
\end{enumerate}
Condition (A1) ensures that the such optimizing sequences for the problem (P) exist.

The problem $(P)$ is said to be {\it well-posed } if  every optimizing sequence for the problem $(P)$ converges to some (actually, unique) solution $(x_0,y_0)$ of $P$. An equivalent notion was studied in \cite{LoM3} and a similar concept  was considered in \cite{Zo}. It is clear that if $(P)$ is well-posed with unique solution $(x_0,y_0)$, then the problem $(P)$ is sup-well posed with unique sup-solution $x_0$. The converse is not true, in general.

The following results, which were proved in \cite{GKaR}, relate the notion of well-posedness of supinf problems with continuity properties of the mappings $S_f$ and $\tilde S_f$:

\begin{theo} \label{equiv_separable} (\cite{GKaR}, Theorem  4.2)
Let the function $f:X\times Y\to [-\infty,+\infty]$ and the  set-valued mapping $K:X\tos Y$  satisfy the assumptions (A1)-(A3). Suppose that $K$ is lsc and with closed images. Then the supinf problem  (P) for the function $f+g+h$, with $(g,h)\in C(X)\times C(Y)$, is well-posed if, and only if, the  set-valued valued mapping $S_f:C(X)\times C(Y)\tos X\times Y$ is single-valued and usc at $(g,h)$. 

\end{theo}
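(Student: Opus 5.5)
By the relation $S_f(g_1+g_2,h_1+h_2)=S_{f+g_1+h_1}(g_2,h_2)$ and Remark \ref{basic_remark}, it suffices to treat the point $(g,h)=(\theta_X,\theta_Y)$. So I will show: the problem (P) for $f$ is well-posed if and only if $S_f$ is single-valued and usc at $(\theta_X,\theta_Y)$. Throughout, $w(x)=\inf_{y\in Kx}f(x,y)$. Condition (A1) guarantees that $v_f$ is finite and that optimizing sequences exist. Since $X\times Y$ need not be metrizable, I will work with nets as well as sequences: by the standard argument, well-posedness extends to optimizing nets.

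\emph{Well-posed $\Rightarrow$ single-valued and usc.}
\begin{itemize}
\item Single-valuedness: any solution $(x_0,y_0)$ gives a constant optimizing sequence, so $S_f(\theta_X,\theta_Y)$ is the singleton $\{(x_0,y_0)\}$.
\item Usc, argued by contradiction: take an open $W\ni(x_0,y_0)$, perturbations $(g_n,h_n)\to 0$ uniformly, and points $(x_n,y_n)\in S_f(g_n,h_n)\setminus W$.
\item Uniform convergence gives $|v_{f+g_n+h_n}-v_f|\le \|g_n\|+\|h_n\|$, and likewise for the inner infima: $|w_n(x)-w(x)|\le\|g_n\|+\|h_n\|$, where $w_n$ is the inner infimum for $f+g_n+h_n$. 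This uses only the finite values of $f$, exactly as in the proof of Proposition \ref{solution_map_1}(b).
\item From $f(x_n,y_n)+g_n(x_n)+h_n(y_n)=w_n(x_n)=v_{f+g_n+h_n}$ it follows that $w(x_n)\to v_f$ and $f(x_n,y_n)\to v_f$. Also $y_n\in Kx_n$.
\item Hence $(x_n,y_n)$ is optimizing, so it converges to $(x_0,y_0)$, contradicting $(x_n,y_n)\notin W$.
\end{itemize}
To handle the non-metrizable setting I will phrase this step with nets. Since $C(X)\times C(Y)$ is metric, one can extract, for each $n$, a pair $(g_n,h_n)$ in the $1/n$-ball, so sequences suffice on the perturbation side.

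\emph{Single-valued and usc $\Rightarrow$ well-posed.} This is the main step, and I will use Lemma \ref{basic_lemma}. Let $\{(x_n,y_n)\}$ be optimizing and set
\[
\eps_n:=v_f-w(x_n)+1/n>0,\qquad \delta_n:=f(x_n,y_n)-w(x_n)+1/n>0,
\]
which both tend to $0$. The hypotheses of Lemma \ref{basic_lemma} hold: $w(x_n)>v_f-\eps_n$, and $f(x_n,y_n)<w(x_n)+\delta_n$ with $y_n\in Kx_n$. Here $K$ is lsc with closed images and (A1)--(A3) hold, so the lemma applies. It yields $g_0^n,h_0^n\ge0$ with $\|g_0^n\|\le\eps_n$ and $\|h_0^n\|\le\delta_n$ such that $(x_n,y_n)\in S_f(-g_0^n,h_0^n)$. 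Since $(-g_0^n,h_0^n)\to(\theta_X,\theta_Y)$, usc of $S_f$ at $(\theta_X,\theta_Y)$ with the singleton value $\{(x_0,y_0)\}$ shows that every neighbourhood of $(x_0,y_0)$ eventually contains $(x_n,y_n)$. Thus the sequence converges to the unique solution.

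One point remains in this direction: single-valuedness means $S_f(\theta_X,\theta_Y)$ is nonempty. I will read "single-valued" as "exactly one point", so the solution exists. Because any solution forms a constant optimizing sequence, the limit is that unique solution.

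The main obstacle is getting the quantifiers in the usc direction right in non-metrizable spaces, and checking carefully that the value and inner-infimum functions depend $1$-Lipschitz on the perturbation despite the extended values. That check follows the reduction to $\dom(f)$ already made in the proof of Proposition \ref{solution_map_1}(b).
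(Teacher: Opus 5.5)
Your proof is correct and follows the standard route for this equivalence; the paper does not reprove the result but cites \cite{GKaR}. The forward direction uses the uniform estimates $|v_{f+g_n+h_n}-v_f|\le\|g_n\|+\|h_n\|$ and $|w_n-w|\le\|g_n\|+\|h_n\|$, as in Proposition \ref{solution_map_1}(b), and the converse applies Lemma \ref{basic_lemma} along an optimizing sequence with $\varepsilon_n,\delta_n\to 0$ and then uses upper semicontinuity. Only minor points remain: nets are never needed, since both directions use only sequences once you discard the finitely many initial terms where $w(x_n)$ or $f(x_n,y_n)$ may be infinite; and if well-posedness is read as ``every optimizing sequence converges to some solution'', get uniqueness by interleaving two solutions into one optimizing sequence rather than from constant sequences.
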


\begin{theo} \label{equiv_general}  (\cite{GKaR}, Theorem  4.3)
Let the function $f:X\times Y\to [-\infty,+\infty]$ and the  set-valued mapping $K:X\tos Y$  satisfy the assumptions (A1)-(A3). Suppose that $K$ is lsc and with closed images. Then the supinf problem  (P) for the function $f+u$, with $u\in C(X\times Y)$, is well-posed if, and only if, the  set-valued  mapping $\tilde S_f:C(X\times Y)\tos X\times Y$ is single-valued and usc at $u$. 

\end{theo}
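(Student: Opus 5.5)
We need to prove Theorem 4.3 from [GKaR]: under (A1)-(A3), with $K$ lsc and closed-valued, the problem (P) for $f+u$ is well-posed if and only if $\tilde S_f$ is single-valued and usc at $u$. By Remark \ref{basic_remark} we replace $f$ by $f+u$ and note $\tilde S_f(u+v)=\tilde S_{f+u}(v)$. It therefore suffices to treat $u=0$, i.e. to show that (P) for $f$ is well-posed iff $\tilde S_f$ is single-valued and usc at $\theta$, the origin of $C(X\times Y)$. The argument is a direct comparison of perturbed solutions with optimizing sequences (nets), following the scheme of Theorem \ref{equiv_separable}.

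\textbf{Necessity.} Assume (P) is well-posed with unique solution $(x_0,y_0)$, so $\tilde S_f(\theta)=\{(x_0,y_0)\}$. Suppose $\tilde S_f$ is not usc at $\theta$. Then there are an open $W\ni(x_0,y_0)$, functions $u_n\in C(X\times Y)$ with $\|u_n\|_{X\times Y,\infty}<1/n$, and points $(x_n,y_n)\in\tilde S_f(u_n)\setminus W$. The perturbation is small: $|v_{f+u_n}-v_f|\le 1/n$, and $|w_{u_n}(x)-w(x)|\le 1/n$ for every $x$, where $w_{u_n}(x)=\inf_{y\in Kx}\{f(x,y)+u_n(x,y)\}$; these estimates only involve finite values of $f$ by (A1). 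Hence
\[
w(x_n)\ge w_{u_n}(x_n)-1/n=v_{f+u_n}-1/n\ge v_f-2/n ,
\]
and similarly $f(x_n,y_n)=v_{f+u_n}-u_n(x_n,y_n)\to v_f$. Also $y_n\in Kx_n$. So $\{(x_n,y_n)\}$ is optimizing for (P), and well-posedness forces $(x_n,y_n)\to(x_0,y_0)$, contradicting $(x_n,y_n)\notin W$. Sequences suffice here because $C(X\times Y)$ is metric.

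\textbf{Sufficiency.} Assume $\tilde S_f(\theta)=\{(x_0,y_0)\}$ and $\tilde S_f$ is usc at $\theta$. Let $\{(x_n,y_n)\}$ be optimizing, and set $\eps_n:=v_f-w(x_n)+1/n>0$ and $\delta_n:=f(x_n,y_n)-w(x_n)+1/n>0$; both tend to $0$. Then $w(x_n)>v_f-\eps_n$ and $f(x_n,y_n)<w(x_n)+\delta_n$. Lemma \ref{basic_lemma} (valid since $K$ is lsc with closed images) yields $g_n\in C(X)$, $h_n\in C(Y)$ with $\|g_n\|\le\eps_n$, $\|h_n\|\le\delta_n$, such that $(x_n,y_n)\in\tilde S_f(u_n)$ for $u_n:=-g_n+h_n$. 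By $(*)$, $\|u_n\|\le\eps_n+\delta_n\to0$. Given an open $W\supset\{(x_0,y_0)\}$, upper semicontinuity gives $\tilde S_f(u_n)\subset W$ for all large $n$, so $(x_n,y_n)\in W$ eventually. Hence $(x_n,y_n)\to(x_0,y_0)$. Since $(x_0,y_0)$ is a solution, every optimizing sequence converges to a solution, which is well-posedness.

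\textbf{Main obstacle.} The delicate point is the uniform comparison of values when $f$ takes infinite values. One has to check that $|v_{f+u}-v_f|\le\|u\|$ and $|w_u-w|\le\|u\|$ on $\dom(w)$, and that $x_n$ lies in $\dom(w)$, using (A1) exactly as in the proof of Proposition \ref{solution_map_1}(b). The remaining steps are routine.
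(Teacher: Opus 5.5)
The paper does not reprove this statement; it quotes it from \cite{GKaR}. Your argument is correct and is the standard one behind it. For necessity, the uniform estimates $|w_{u_n}-w|\le\|u_n\|$ and $|v_{f+u_n}-v_f|\le\|u_n\|$ show that solutions of nearby perturbed problems form an optimizing sequence; for sufficiency, Lemma~\ref{basic_lemma} is used exactly as in the proof of Proposition~\ref{solution_map_1}(d). The only bookkeeping point, which you already flag, is to discard the finitely many indices with $w(x_n)=-\infty$ or $f(x_n,y_n)\notin\R$, so that $\eps_n$ and $\delta_n$ are finite positive numbers.
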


It should be noted, see,  e.g.,  \cite{CKR1},  that similar properties hold in the case of optimization of functions of one variable.

The next two results, which are generic variational principles for supinf problems, are consequences  of what was obtained above for the mappings $S_f$ and $\tilde S_f$, the last two theorems and general results for set-valued mappings:

\begin{theo}\label{var_principle_separable}
Let $X$ and $Y$ be completely regular topological spaces and suppose that the function $f:X\times Y\to [-\infty,+\infty]$ and the set-valued mapping $K:X\tos Y$  satisfy the assumptions (A1)-(A2\,$'$) and, in addition, the mapping $K$ is both usc and lsc with (nonempty) compact values. Suppose that the set $\Gr(K)$ (with the inherited from $X\times Y$ topology) contains a dense and completely metrizable subspace. Then the set $\{(g,f)\in C(X)\times C(Y):$ the supinf problem (P) for the function $f+g+h$ is well-posed\} contains a dense and $G_\delta$-subset of $C(X)\times C(Y)$. 
\end{theo}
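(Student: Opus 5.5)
The plan is to reduce the statement, via Theorem \ref{equiv_separable}, to a generic single-valuedness result for set-valued mappings, and then verify the hypotheses of that result using Proposition \ref{solution_map_1}. By Theorem \ref{equiv_separable} (its hypotheses hold, since (A2$'$) gives (A2)--(A3), and $K$ is lsc with compact, hence closed, images), the set in question coincides with
$$
\{(g,h)\in C(X)\times C(Y):\ S_f \mbox{ is single-valued and usc at } (g,h)\}.
$$
It therefore suffices to show that $S_f$ is single-valued and usc at the points of a dense $G_\delta$-subset of the Banach space $E:=C(X)\times C(Y)$, which is a Baire space.

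\textbf{Step 1: reduction to a minimal usc compact-valued submapping.} Consider $S_f$ as a mapping into $T:=\Gr(K)$. This is legitimate because every solution $(x,y)$ satisfies $y\in Kx$. I will use the known result from the theory of quasi-continuous and minimal usco mappings (Christensen, Kenderov--Lucchetti type), in the following form. Let $\Phi:E\tos T$ be a mapping from a Baire space $E$ into a completely regular space $T$. Suppose that $\Phi$ has dense domain, closed graph, is quasi-continuous, and is open as a mapping into $T$ (meaning $\Phi(O)$ has nonempty interior in $T$ for every nonempty open $O$). Suppose further that $T$ contains a dense completely metrizable subspace $T_0$. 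Then $\Phi$ is single-valued and usc at the points of a dense $G_\delta$-subset of $E$.

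Proposition \ref{solution_map_1}(a)--(d) supplies exactly these four properties of $S_f$. The hypothesis on $\Gr(K)$ supplies $T_0$. So it remains to prove, or recall and adapt, this abstract result.

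\textbf{Step 2: proving the abstract result.} Let $d$ be a complete metric on $T_0$. Since $T_0$ is dense in $T$ and completely metrizable, it is a $G_\delta$ in its closure $T$; write $T_0=\bigcap_n G_n$ with each $G_n$ open in $T$. For each $n$, let $O_n$ be the union of all open sets $W\subset E$ for which there exists an open $V\subset T$ with $V\cap T_0$ of $d$-diameter $<1/n$, $S_f(W)\subset V$, and $\overline V\subset G_n$ (closure in $T$).

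First, $O_n$ is dense. Take a nonempty open $O\subset E$. By openness (d), $S_f(O)$ contains a nonempty open set of $T$. This set meets $T_0$, so by complete regularity we can find a small open $V$ with $\overline V\subset G_n$, $V\cap T_0$ of diameter $<1/n$, and $V\cap S_f(O)\ne\emptyset$. Quasi-continuity (c) then gives a nonempty open $W\subset O$ with $S_f(W)\subset V$.

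Now take $(g,h)\in\bigcap_n O_n\cap\Dom(S_f)$. Passing to finite intersections, we may assume the associated sets satisfy $V_{n+1}\subset V_n$, with $\overline{V}_n\subset G_n$ and $S_f(W_n)\subset V_n$.

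\textbf{Step 3: the expected main obstacle.} The difficulty is showing that $\bigcap_n\overline V_n$ is a single point $p$, and that $S_f(g,h)=\{p\}$ with $S_f$ usc there. Two issues arise. One must show $\bigcap_n\overline{V}_n\neq\emptyset$ using completeness of $d$, even though the solutions need not lie in $T_0$. And one must deduce usc from the shrinking diameters, using the closed graph (b) and the density of the domain.

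My first attempt goes as follows. Pick points of $S_f(W_n)$ and approximate them by points $t_n\in V_n\cap T_0$. These form a $d$-Cauchy sequence, so they converge to some $p\in T_0$. Then $p$ lies in $\bigcap_n\overline V_n$, which is contained in $\bigcap G_n = T_0$; this forces $\bigcap_n\overline V_n\cap T_0$ to be the single point $p$. Next, closed graph and $S_f(g,h)\subset \bigcap_n \overline V_n$ should give $S_f(g,h)=\{p\}$. Usc at $(g,h)$ will follow from a standard argument: $\{V_n\}$ is a neighbourhood base of $p$ relative to the compact-type situation, by the closed-graph property.

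Finally, $\Dom(S_f)$ must be shown to be residual as well. I expect this to be handled by showing that for $(g,h)\in\bigcap O_n$ the limit $p$ is automatically a solution, via (b) applied to nets $(g_\alpha,h_\alpha)\to(g,h)$ drawn from the dense domain with solutions in $V_n$. This makes $\bigcap_n O_n$ itself the required dense $G_\delta$-set.
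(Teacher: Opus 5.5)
Your proposal follows the paper's proof. You reduce via Theorem~\ref{equiv_separable} to showing that $S_f$ is single-valued and usc on a dense $G_\delta$-subset of $C(X)\times C(Y)$. You then obtain this from properties (a)--(d) of Proposition~\ref{solution_map_1}, viewed as properties of a map into $\Gr(K)$, together with exactly the abstract result you state, which the paper simply cites as Theorem~5.3 of \cite{CKR2}.

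Your optional self-contained proof of that abstract result needs one correction. In Step~3 the key fact is that every neighbourhood $U$ of $p$ contains $V_n$ for large $n$. You need this both for usc and for showing via (b) that $(g,h)\in\Dom(S_f)$. It does not follow from the closed graph or from any compactness; it follows from regularity of $\Gr(K)$:

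\begin{enumerate}
\item[$\bullet$] Take an open $U'$ with $p\in U'\subset\overline{U'}\subset U$, and $r>0$ with $B_d(p,r)\subset U'$.
\item[$\bullet$] Since $p\in{\rm cl}_{T_0}(V_n\cap T_0)$ and ${\rm diam}_d(V_n\cap T_0)<1/n$, for $1/n<r$ you get $V_n\cap T_0\subset U'$.
\item[$\bullet$] Because $V_n$ is open and $T_0$ is dense, $V_n\subset\overline{V_n\cap T_0}\subset\overline{U'}\subset U$.
\end{enumerate}

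With this argument the sets $G_n$ are in fact unnecessary.
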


\begin{theo}\label{var_principle_general}
Let $X$ and $Y$ be completely regular topological spaces and suppose that the function $f:X\times Y\to [-\infty,+\infty]$ and the  set-valued mapping $K:X\tos Y$  satisfy the assumptions (A1)-(A2\,$'$)  and, in addition, the mapping $K$ is both usc and lsc with (nonempty) compact values. Suppose that the set $\Gr(K)$ (with the inherited from $X\times Y$ topology) contains a dense and completely metrizable subspace. Then the set $\{u\in C(X\times Y):$ the supinf problem  (P) for the function $f+u$ is well-posed\} contains a dense and $G_\delta$-subset of $C(X\times Y)$. 
\end{theo}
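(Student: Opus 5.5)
By Theorem \ref{equiv_general}, it suffices to show that the set of $u\in C(X\times Y)$ at which $\tilde S_f$ is single-valued and usc contains a dense $G_\delta$-subset of $C(X\times Y)$. I would derive this from the properties of $\tilde S_f$ listed in the Proposition following Proposition \ref{solution_map_1}, combined with a general selection-type result for set-valued mappings. The setting is that of Kenderov--Lucchetti and Christensen--Kenderov--Revalski: a quasi-continuous, closed-graph mapping with dense domain from a Baire space into a space containing a dense completely metrizable subspace is generically single-valued and usc. The first step is therefore to check that $\tilde S_f$ meets these hypotheses as a mapping into $\Gr(K)$ rather than into $X\times Y$. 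Its values lie in $\Gr(K)$, and $\Gr(K)$ is closed because $K$ is usc with compact images, so the graph stays closed there. Openness into $\Gr(K)$ is part of the Proposition, and $C(X\times Y)$ is a Banach space, hence Baire.

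Next I would build the $G_\delta$ set directly. Let $M\subset \Gr(K)$ be dense and completely metrizable, with a complete metric $d$. Since $M$ is a dense completely metrizable subset of a regular space, it is a $G_\delta$ in $\Gr(K)$; write $M=\bigcap_n G_n$ with $G_n$ open in $\Gr(K)$. For each $n$, let $W_n$ be the union of all open sets $W\subset C(X\times Y)$ such that $\tilde S_f(W)\subset V$ for some open $V\subset \Gr(K)$ with $V\cap M$ nonempty, $d$-diameter of $V\cap M$ less than $1/n$, and $\overline V\subset G_n$.

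Density of $W_n$ is the main obstacle. Given an open $O$, openness of $\tilde S_f$ into $\Gr(K)$ makes $\tilde S_f(O)$ contain a nonempty open subset of $\Gr(K)$, which therefore meets $M$. Inside it, by regularity of $\Gr(K)$, I choose an open $V$ with small $d$-diameter on $M$ and closure inside $G_n$. Quasi-continuity then yields a nonempty open $W\subset O$ with $\tilde S_f(W)\subset V$, so $W\subset W_n$. This is exactly where openness is needed: it guarantees that the images land near $M$.

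Finally I would set $D=\bigcap_n W_n$, which is a dense $G_\delta$ by the Baire theorem, and take $u\in D\cap \Dom(\tilde S_f)$. Write $V_n$ for the open set attached to the $n$-th stage, and replace it by the intersection $V_1\cap\dots\cap V_n$ along a decreasing neighbourhood system of $u$. The set $\tilde S_f(u)$ lies in every $V_n$. I then need to show it is a single point of $M$, and to obtain upper semicontinuity from closed graph plus shrinking. I would pick points $m_n\in V_n\cap M$; these form a $d$-Cauchy sequence, whose limit $m$ lies in $M$. To identify $m$ with every point of $\tilde S_f(u)$, I would use $\overline V_n\subset G_n$ together with nets from the closed graph. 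One also needs $u\in\Dom(\tilde S_f)$ generically, i.e. the nonemptiness of $\tilde S_f(u)$; I would get it from the same argument, since $\tilde S_f(u_k)\ni z_k$ with $z_k\to m$ and $u_k\to u$ gives $m\in \tilde S_f(u)$ by closedness of the graph. Theorem \ref{equiv_general} then concludes well-posedness. The delicate point throughout is controlling the convergence in the non-metrizable ambient space, using only the completeness of $M$ and regularity.
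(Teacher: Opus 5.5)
Your proposal follows the paper's proof: reduce via Theorem \ref{equiv_general} to generic single-valuedness and upper semicontinuity of $\tilde S_f$, which comes from its closed graph, dense domain, quasi-continuity and openness into $\Gr(K)$. The paper simply cites Theorem 5.3 of \cite{CKR2} for the Baire-category step that you reprove by hand. In that reproof, however, upper semicontinuity at $u\in D$ does not follow from the closed graph, since there is no compactness; it follows from density of $M$. Suppose $z_n\in\tilde S_f(N_n)\setminus G$ for shrinking neighbourhoods $N_n$ of $u$ with $\tilde S_f(N_n)\subset V_1\cap\dots\cap V_n$. Take an open $G'$ with $m\in G'\subset\overline{G'}\subset G$ and, by density, $m_n'\in M$ in the nonempty open set $(V_1\cap\dots\cap V_n)\setminus\overline{G'}\ni z_n$; then $d(m_n',m)<1/n$ contradicts $m_n'\notin G'$. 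Likewise, you should choose $m_n\in\tilde S_f(N_n)\cap M$, which is nonempty by openness, so that the closed graph yields $m\in\tilde S_f(u)$.
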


\begin{proof}

Both mappings, $S_f$ and $\tilde S_f$,  are with closed graphs, dense domains, open and quasi-continuous in the respective definition spaces. From general results for set-valued mappings, see e.g. \cite{CKR2}, Theorem 5.3, it follows that  both mappings are single-valued and usc at the points of a dense and $G_\delta$-subset of their corresponding definition spaces. The rest follows from the Theorems \ref{equiv_separable} and \ref{equiv_general}.

\end{proof}

\begin{rem}{\rm
The last theorem was proved in \cite{KL} for the special case $f\equiv 0$. In fact, in the same paper it was shown that the  condition that $\Gr(K)$ contains a dense completely metrizable subspace is equivalent to the generic variational principle in $C(X\times Y)$ for the function $f\equiv 0$. This means that the validity of the generic variational principle  for any function $f$ as in the theorem, is also equivalent to the fact that the graph $\Gr(K)$ contains a dense completely metrizable subspace.
}
\end{rem} 

We have two important partial cases of the above theorems. The first one is immediate, having in mind that the graph of any usc and compact-valued set-valued mapping is always closed.

\begin{cor}
Let $X$ and  $Y$  be complete metric spaces and suppose that the function $f:X\times Y\to [-\infty,+\infty]$ and the set-valued mapping $K:X\tos Y$  satisfy the assumptions (A1)-(A2\,$'$)  and, in addition, the mapping $K$ is both usc and lsc with (nonempty) compact values.  Then,

\begin{enumerate}

\item[{\rm (a)}] the set $\{(g,h)\in C(X)\times C(Y):$ the  supinf problem (P) for the function  $f+g+h$ is well-posed\} contains a dense and $G_\delta$-subset of $C(X)\times C(Y)$; 

\item[{\rm (b)}] the set $\{u\in C(X\times Y):$ the  supinf problem (P) for the function $f+u$ is well-posed\} contains a dense and $G_\delta$-subset of $C(X\times Y)$. 

\end{enumerate} 

\end{cor}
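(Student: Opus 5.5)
The plan is to reduce the corollary to Theorems \ref{var_principle_separable} and \ref{var_principle_general}. Their hypotheses on $X$, $Y$, $f$ and $K$ are exactly those assumed here, apart from one extra condition: $\Gr(K)$, with the topology inherited from $X\times Y$, must contain a dense completely metrizable subspace. So it suffices to show that, when $X$ and $Y$ are complete metric spaces, $\Gr(K)$ is itself completely metrizable. Then $\Gr(K)$ is a dense completely metrizable subspace of itself, and (a) and (b) follow directly from the two theorems.

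First, metric spaces are completely regular, so the standing assumptions of the two theorems hold. Next, equip $X\times Y$ with the metric $d((x,y),(x',y'))=d_X(x,x')+d_Y(y,y')$. This metric is complete and induces the product topology. Every closed subset of a complete metric space is complete in the restricted metric, and hence completely metrizable.

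It remains to check that $\Gr(K)$ is closed in $X\times Y$. This is the fact already used in the proof of Proposition \ref{solution_map_1}(b): a usc mapping with nonempty compact (hence closed) images in a Hausdorff, indeed regular, space has closed graph. I would include the short argument for completeness. Take $(x,y)\notin\Gr(K)$. Since $Kx$ is compact and $y\notin Kx$, there are disjoint open sets $V\ni y$ and $V'\supset Kx$. Upper semicontinuity gives an open $U\ni x$ with $Kx'\subset V'$ for all $x'\in U$. Then $U\times V$ is a neighbourhood of $(x,y)$ disjoint from $\Gr(K)$.

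None of these steps is a real obstacle, since the corollary is an immediate specialization. The only point needing care is that complete metrizability must be taken for the subspace topology on $\Gr(K)$ inherited from $X\times Y$. The product metric handles this, because the restricted metric induces exactly that subspace topology.
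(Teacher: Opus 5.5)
Your proposal is correct and follows the paper's route. The paper also observes that $\Gr(K)$ is closed in $X\times Y$ because $K$ is usc with compact values, so $\Gr(K)$ is completely metrizable as a closed subset of the complete metric space $X\times Y$, and it then applies Theorems \ref{var_principle_separable} and \ref{var_principle_general}.
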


 The next is a generic variational principle for unconstrained supinf problems.

\begin{theo}\label{var_principle_compact}
Let $X$ and $Y$ be completely regular topological spaces each of which contains a dense completely metrizable subspace. Suppose that $Y$ is compact and that the function $f:X\times Y\to [-\infty,+\infty]$ satisfy the assumptions (A1)-(A2\,$'$)  with respect to the  mapping $Kx:=Y$,  $x\in X$. Then,
\begin{enumerate}

\item[{\rm (a)}] the set $\{(g,h)\in C(X)\times C(Y):$ the unconstrained supinf problem (P) for the function $f+ g+h$ is well-posed\} contains a dense and $G_\delta$-subset of $C(X)\times C(Y)$; 

\item[{\rm (b)}] the set $\{u\in C(X\times Y):$ the unconstrained supinf problem (P) for the function $f+u$ is well-posed\} contains a dense and $G_\delta$-subset of $C(X\times Y)$. 

\end{enumerate} 

\end{theo}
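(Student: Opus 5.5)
The plan is to reduce Theorem \ref{var_principle_compact} to Theorems \ref{var_principle_separable} and \ref{var_principle_general}, applied to the constant mapping $Kx:=Y$ for all $x\in X$. This requires checking two things: that this $K$ satisfies the hypotheses on the constraint mapping, and that $\Gr(K)$ contains a dense completely metrizable subspace.

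First, $K$ is constant with nonempty image $Y$, so it is trivially both lsc and usc. Since $Y$ is compact, $K$ has compact values. By assumption, $f$ satisfies (A1)--(A2$'$) with respect to this $K$. So every hypothesis of Theorems \ref{var_principle_separable} and \ref{var_principle_general} holds except, possibly, the one about $\Gr(K)$.

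Second, note that $\Gr(K)=X\times Y$ with the product topology. Let $X_0\subset X$ and $Y_0\subset Y$ be dense completely metrizable subspaces. I would take $Z:=X_0\times Y_0$.
\begin{itemize}
\item[(i)] $Z$ is dense in $X\times Y$, because any nonempty basic open set $U\times V$ meets $X_0\times Y_0$.
\item[(ii)] $Z$ is completely metrizable: if $d_1,d_2$ are complete metrics compatible with the topologies of $X_0,Y_0$, then $d((x,y),(x',y'))=\max\{d_1(x,x'),d_2(y,y')\}$ is a complete metric on $X_0\times Y_0$. It induces the product topology, and this coincides with the subspace topology inherited from $X\times Y$.
\end{itemize}
Hence $\Gr(K)$ contains a dense completely metrizable subspace. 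Then (a) follows from Theorem \ref{var_principle_separable} and (b) from Theorem \ref{var_principle_general}, with "well-posed" now referring to the unconstrained problem.

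There is no real obstacle here; the only points needing care are the two small topological verifications in (i) and (ii). Compactness of $Y$ is used only to make the values of $K$ compact, which is what permits Berge's theorem and the properties of the solution mappings.
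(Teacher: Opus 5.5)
Your proposal is correct and is essentially the paper's proof: set $Kx:=Y$, observe that it is usc and lsc with compact values and that $\Gr(K)=X\times Y$, then apply Theorems \ref{var_principle_separable} and \ref{var_principle_general}. Your explicit check that $X_0\times Y_0$ is a dense completely metrizable subspace of $X\times Y$ fills in a step the paper leaves implicit.
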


\begin{proof}

The mapping $K:X\tos Y$ given by $Kx=Y$, $x\in X$, is evidently both usc and lsc and with compact values and the graph $\Gr(K)$ is $X\times Y$. It remains to apply Theorems \ref{var_principle_separable} and \ref{var_principle_general}.
\end{proof}

It is possible (as it is for the case for one level optimization) to obtain  generic variational principles for supinf problems  for a notion of well-posedness which generalizes the well-posedness introduced above, in the spirit  that it does not require that the solution is necessarily unique. Namely, following the idea for the same notion in one level optimization, we say  that the problem (P) for the function $f:X\times Y\to [-\infty,+\infty]$  and the set-valued mapping $K:X\tos Y$, which satisfy (A1),  is {\it well-posed in generalized sense}, if every optimizing {\bf net} has  a convergent to a solution subnet. In particular, in such a case the set of solutions of (P) is nonempty and compact. It should be mentioned (and this is easily seen, as in the case of one level optimization) that in the case of the usual well-posedness with unique solution, we can confine ourselves to work only with optimizing sequences, while in this generalized case, we must consider nets, since there are no first countable conditions imposed on the underlying spaces. It can be seen, with essentially the same proofs as in Theorems \ref{equiv_separable} and \ref{equiv_general}, that the mapping $S_f$ (resp. $\tilde S_f$) is usc and (nonempty) compact-valued at $(g,h)\in C(X)\times C(Y)$ (resp. at $u\in C(X\times Y)$) if, and only if the supinf problem for the function $f+g+h$ (resp. for the function $f+u$) is generalized well-posed. Having this in hand, the following theorem is proved exactly as Theorems \ref{var_principle_separable} and \ref{var_principle_general} using, instead of Theorem 5.3 from \cite{CKR2}, Theorem 5.2 from the same paper.

Recall that a completely regular topological space is called {\it \v Cech complete} if it lies as a $G_\delta$-subset in some of its compactifications. 

\begin{theo}
Let $X$ and $Y$ be completely regular topological spaces and suppose that the function $f:X\times Y\to [-\infty,+\infty]$ and the  set-valued mapping $K:X\tos Y$  satisfy the assumptions (A1)-(A2\,$'$)  and, in addition, the mapping $K$ is both usc and lsc with (nonempty) compact values.  Suppose that $\Gr(K)$ contains a dense \v Cech complete subspace. Then,

\begin{enumerate}

\item[{\rm (a)}] the set $\{(g,h)\in C(X)\times C(Y):$ the  supinf problem (P)  for the function $f+ g+h$ is generalized well-posed\} contains a dense and $G_\delta$-subset of $C(X)\times C(Y)$; 

\item[{\rm (b)}] the set $\{u\in C(X\times Y):$ the  supinf problem (P) for the function $f+u$ is generalized well-posed\} contains a dense and $G_\delta$-subset of $C(X\times Y)$. 

\end{enumerate} 

\end{theo}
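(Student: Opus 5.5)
The plan follows the proof of Theorems \ref{var_principle_separable} and \ref{var_principle_general}. The set-valued part is handled by the general result Theorem 5.2 of \cite{CKR2}, in place of Theorem 5.3. The link to generalized well-posedness is handled by the analogue of Theorems \ref{equiv_separable} and \ref{equiv_general} for usc compact-valued solution maps. I treat (a) for $S_f$ and (b) for $\tilde S_f$ in parallel, since Proposition \ref{solution_map_1} and its analogue for $\tilde S_f$ provide the same four properties.

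\textbf{Step 1.} Under (A1)-(A2$'$), with $K$ usc, lsc and compact-valued, I record the following facts about $S_f$ (resp. $\tilde S_f$). It has dense domain, closed graph, is quasi-continuous, and is open as a mapping into $\Gr(K)$. Its values lie in $\Gr(K)$, because $K$ has closed graph. I then regard $S_f$ as a mapping from $C(X)\times C(Y)$, which is a Banach space and hence Baire, into the space $\Gr(K)$.

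\textbf{Step 2.} I invoke Theorem 5.2 of \cite{CKR2}. As I read it, that theorem says the following: let $\Phi$ be a mapping from a Baire (complete metric) space into a completely regular space $T$, with dense domain, closed graph, quasi-continuous and open onto its range, where the range contains a dense \v Cech complete subspace. Then there is a dense $G_\delta$ set of points at which $\Phi$ has nonempty compact values and is usc. The hypothesis that $\Gr(K)$ contains a dense \v Cech complete subspace supplies exactly the range condition. Openness into $\Gr(K)$ is what transfers the completeness in the range into a residual set in the domain. Concretely, one takes a sequence of open covers witnessing \v Cech completeness, uses openness and quasi-continuity to build dense open sets $W_n$ in the domain whose images are ``small'' with respect to the $n$-th cover, and intersects them.

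\textbf{Step 3.} I prove the equivalence: $S_f$ is usc with nonempty compact values at $(g,h)$ if and only if the problem for $f+g+h$ is generalized well-posed. The same holds for $\tilde S_f$ at $u$.

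For the direction ``usc and compact-valued implies generalized well-posed'', take an optimizing net $(x_\alpha,y_\alpha)$ for $f+g+h$. Using Lemma \ref{basic_lemma} together with Remark \ref{basic_remark}, I find perturbations $(g_\alpha,h_\alpha)\to(g,h)$, with norms controlled by the defects $v-w(x_\alpha)$ and $f(x_\alpha,y_\alpha)-w(x_\alpha)$, such that $(x_\alpha,y_\alpha)\in S_f(g_\alpha,h_\alpha)$. Upper semicontinuity and compactness of $S_f(g,h)$ then give a subnet clustering, and hence converging, to a point of $S_f(g,h)$.

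For the converse, suppose upper semicontinuity fails at $(g,h)$. Then there is an open set $V\supset S_f(g,h)$ and a net $(g_\alpha,h_\alpha)\to(g,h)$ with points of $S_f(g_\alpha,h_\alpha)$ outside $V$. By the uniform-convergence estimates used in the proof of (b) of Proposition \ref{solution_map_1}, these points form an optimizing net for $f+g+h$. Generalized well-posedness then yields a subnet converging into $S_f(g,h)\subset V$, which is a contradiction since the points lie in the closed complement of $V$. Compactness of $S_f(g,h)$ follows because every net in $S_f(g,h)$ is itself optimizing.

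\textbf{Main obstacle.} I expect Step 3 to be the hardest part, specifically the direction from usc to generalized well-posedness for nets. There one must make sure that the perturbations produced by Lemma \ref{basic_lemma} tend to zero uniformly along the net. I would set $\varepsilon_\alpha=v-w(x_\alpha)+1/\text{(index)}$ and $\delta_\alpha=f(x_\alpha,y_\alpha)-w(x_\alpha)+\varepsilon_\alpha$, both positive and tending to $0$.

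Step 2 is a citation only. Its one subtle point is checking that ``open into $\Gr(K)$'' matches the openness hypothesis of \cite{CKR2}, which is exactly what property (d) provides.
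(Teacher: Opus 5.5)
Your proposal is correct and follows the paper's route. It combines the four properties of $S_f$ and $\tilde S_f$ (dense domain, closed graph, quasi-continuity, openness into $\Gr(K)$), Theorem 5.2 of \cite{CKR2} in place of Theorem 5.3, and the equivalence ``usc with nonempty compact values $\Leftrightarrow$ generalized well-posed''. The paper only asserts that equivalence by analogy with Theorems \ref{equiv_separable} and \ref{equiv_general}, whereas you actually sketch it.

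The one slip is that ``$1/\text{(index)}$'' has no meaning for a general directed set $\Lambda$. Any of the following fixes it:
\begin{itemize}
\item reindex via the subnet $(\alpha,n)\mapsto(x_\alpha,y_\alpha)$ on $\Lambda\times\mathbb{N}$;
\item take $\varepsilon_\alpha=\delta_\alpha$ equal to twice the larger defect, using the zero perturbation when $(x_\alpha,y_\alpha)$ is already a solution;
\item argue by contradiction with fixed $\varepsilon,\delta$ dictated by the usc neighbourhood.
\end{itemize}
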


\medskip

We will end this section by considering the special case of Theorem \ref{var_principle_compact}, when $Y$ is a singleton. In such a setting the space $C(X\times Y)$ can be identified with the space $C(X)$. Moreover, if we perturb a fixed function $f:X\times Y\to [-\infty,+\infty]$, the supinf problem for this function and for all its perturbations by functions from $C(X\times Y)$ become only sup problems for the function $f$ on the first variable. And the well-posedness of the corresponding supinf problem, is in fact, well-posedness in the usual Tykhonov sense of the sup problem for the perturbed function.  Observe also that condition (A1) excludes the situation in which the function $f$ (and its perturbations) take value $+\infty$. Therefore, we have the following corollary from the Theorem \ref{var_principle_compact}:

\begin{cor} 
Let $X$ be a completely regular topological space and $f:X\to[-\infty,+\infty)$ be a proper  extended real-valued function which is continuous and bounded above. If $X$ contains a dense completely metrizable subspace, then the set $\{g\in C(X):$ the maximization problem for the function  $ f+g$ is Tykhonov well-posed\} contains a dense and  $G_\delta$-subset of $C(X)$.

\end{cor}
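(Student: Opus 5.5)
The plan is to obtain the corollary as the special case of Theorem \ref{var_principle_compact}(b) in which $Y$ is a singleton. Let $Y=\{y^*\}$; this is compact, completely regular, and trivially completely metrizable, so it is its own dense completely metrizable subspace. I would define $F:X\times Y\to[-\infty,+\infty]$ by $F(x,y^*):=f(x)$ and take $Kx:=Y$ for all $x\in X$. The hypotheses on $X$ in Theorem \ref{var_principle_compact} are exactly those of the corollary.

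Next I would check (A1)--(A2$'$) for $F$. Here $w(x)=\inf_{y\in Kx}F(x,y)=f(x)$. Since $f$ is proper with values in $[-\infty,+\infty)$ and bounded above, $w$ is bounded above and proper as a function into $\R\cup\{-\infty\}$, which is (A1). Continuity of $f$ in the extended sense gives continuity of $F$, because $X\times Y$ is homeomorphic to $X$ via $(x,y^*)\mapsto x$; this is (A2$'$).

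Then I would use the identification $C(X\times Y)\cong C(X)$, $u\mapsto g:=u(\cdot,y^*)$. This is an isometric linear isomorphism for the sup norms, so it carries dense $G_\delta$ subsets to dense $G_\delta$ subsets. For $u$ corresponding to $g$, the supinf problem for $F+u$ has
\[
w_u=f+g, \qquad v_{F+u}=\sup_X (f+g).
\]
It remains to match the two notions of well-posedness. Every sequence $(x_n,y_n)$ has $y_n=y^*$, and the conditions $w_u(x_n)\to v$ and $(F+u)(x_n,y^*)\to v$ coincide. So optimizing sequences for (P) are exactly the pairs $(x_n,y^*)$ with $(x_n)$ a maximizing sequence for $f+g$. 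Likewise, solutions $(x_0,y^*)$ of (P) correspond exactly to maximizers $x_0$ of $f+g$.

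Hence (P) for $F+u$ is well-posed if and only if maximizing $f+g$ is Tykhonov well-posed: every maximizing sequence converges to a (unique) maximizer. Theorem \ref{var_principle_compact}(b) then gives a dense $G_\delta$ set of such $u$, and transporting it to $C(X)$ finishes the proof. Part (a) could be used instead, since $C(Y)\cong\R$ merely adds constants, which do not affect well-posedness. I do not see a genuine obstacle; the only point needing care is the bookkeeping that equates the two well-posedness notions and makes sure no value $+\infty$ arises, which is excluded by the assumption $f<+\infty$.
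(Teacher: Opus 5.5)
Your proposal is correct and follows the paper's proof: it specializes Theorem \ref{var_principle_compact}(b) to a singleton $Y$, identifies $C(X\times Y)$ isometrically with $C(X)$ and $\Gr(K)$ with $X$, and checks that optimizing sequences and solutions of (P) reduce to maximizing sequences and maximizers of $f+g$. Your side remark that part (a) would also work is true but not as immediate as you suggest: one must pass from a dense $G_\delta$ subset of $C(X)\times\R$ lying inside $A\times\R$, where $A$ is the set of good $g$, to a dense $G_\delta$ subset of $C(X)$ inside $A$, and this needs an extra category argument (e.g.\ Kuratowski--Ulam), so (b) is the right route.
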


\begin{proof}
The proof follows by the remarks before the formulation and the obvious fact that the graph $\Gr(K)$ in this case can be identified with $X$.
\end{proof}

This result was obtained in \cite{CKR1} for the case $f\equiv 0$ (by the way, from the latter, it follows that the same result is true for any real-valued continuous bounded function $f$ in $X$, since the translations in $C(X)$ are homeomorphisms). In fact, in the same paper, it was shown that the validity of the generic variational principle with well-posedness  for $f\equiv 0$ is equivalent to the property that the underlying space $X$ contains a dense completely metrizable subspace. We see from the last corollary that the result is true for a larger class of objective functions $f$. Moreover, when the function $f$  is real-valued (i.e. everywhere finite valued) we do not necessary need that $f$ is continuous  -  as it was shown in  the recent paper \cite{IvKR}, to have a generic variational principle in this case,  as in the last corollary, only lower semicontinuity (even a weaker notion) of the objective function $f$  is enough.

\bigskip

D. Gaumont, CRED, University of Pantheon-Assas, 31 Rue Froidevaux, 75014 Paris, France, \\ e-mail: damien.gaumont@u-paris2.fr

D. Kamburova, Faculty of Mathematics and Informatics, Sofia University, 5, James Bourchier blvd., 1164 Sofia, Bulgaria \\
and \\
Institute of Mathematics and Informatics, Bulgarian Academy of Sciences, Acad. G. Bonchev str., block 8, 1113 Sofia, Bulgaria; \\ e-mail: detelinak@math.bas.bg

J.P. Revalski, International Center for Mathematical Sciences, Institute of Mathematics and Informatics, Bulgarian Academy of Sciences, Acad. G. Bonchev str., block 8, 1113 Sofia, Bulgaria \\
and \\
Centre of Excellence in Informatics and Information and Communication Technologies, Sofia, Bulgaria.
\\ e-mail: revalski@math.bas.bg


\begin{thebibliography}{XX}



\bibitem{CKR1} M.M. \v Coban, P.S. Kenderov and J.P. Revalski, Generic well-posedness  of  optimization  problems   in   topological   spaces, {\it Mathematika} {\bf 36} (1989), 301--324.

\bibitem{CKR2} M.M. \v Coban, P.S. Kenderov and J.P. Revalski, Densely defined selections of multivalued mappings, {\it Trans. Amer.  Math. Soc.}  {\bf 344} (1994), 533--552.

\bibitem{CKR3} M.M. Choban, P.S. Kenderov and J.P. Revalski, Variational principles and topological games, {\it Topology and its Appl.}, {\bf 159} (2012), 3550--3562.

\bibitem{DGoZ1} R. Deville, G. Godefroy and  V. Zizler, A smooth variational principle with applications to Hamilton--Jacobi equations in infinite dimensions,  {\it J. Funct. Anal.} {\bf 111} (1993), 197--212.

\bibitem{DGoZ2} R. Deville, G. Godefroy and  V. Zizler, Smoothness and renormings in Banach spaces,  Pitman Monogr. Surveys Pure Appl. Math., {\bf 64}, Longman Scientific \& Technical, Harlow, 1993.

\bibitem{DoZo} A. Dontchev, T. Zolezzi,  Well-posed optimization problems, {\it Lecture Notes in Math.} {\bf 1543},  Springer-Verlag, Berlin,
1993.

\bibitem{Chr} J.P.R. Christensen, Theorems of  I. Namioka  and R.E. Johnson type for upper  semi-continuous and compact-valued set-valued maps, {\it Proc. Amer. Math. Soc.} {\bf 86} (1982), 649--655.


\bibitem{GKaR} D. Gaumont, D. Kamburova and J.P. Revalski, Perturbations of supinf problems with constraints, {\it Vietnam J. Math.},  {\bf 47}, No.3, (2019),  659--667.

\bibitem{ILR} A. Ioffe,  R. Lucchetti and  J.P. Revalski,   A variational principle for problems with functional constraints, {\it SIAM J. Optimization}, {\bf 12}, No.2, (2001), 461--478 . 

\bibitem {IZa} A.D. Ioffe, A.J. Zaslavski, Variational principles and well-posedness in optimization and calculus of variations, {\it SIAM J. Control Optimization} {\bf 38} (2000), 566--581.

\bibitem{IvKR} M. Ivanov, P.S. Kenderov and J.P. Revalski, Variational principles for maximization problems with lower-semicontinuous goal functions, {\it Set-valued and Variational Analysis},  {\bf 30} (2022),  559–-571.   https://doi.org/10.1007/s11228-021-00604-1
    
\bibitem{Ke} S. Kempisty, Sur les fonctions quasi-continues, {\it Fund. Math.}, {\bf 19} (1932), 184--197.    

\bibitem{KL} P.S. Kenderov  and R. Lucchetti,  Generic well-posedness of supinf problems, {\it   Bull. Austral. Math. Soc.} {\bf 54} (1996), 5--25.


\bibitem{KR2} P.S. Kenderov P. and  J.P. Revalski,  Variational principles for supinf problems,  {\it Compt. rend. Acad. bulg. Sci.} 70(12) (2017), 1635--1642 .



\bibitem{KRi} P.S. Kenderov and N. Ribarska, Generic uniqueness of the solution of "min-max" problems, in {\it Lecture Notes in Mathematical Systems}, {\bf 304} (1988), pp. 41--48, Springer. 

\bibitem{LaR} M. Lassonde and J.P. Revalski, Fragmentability of sequences of set-valued mappings with application to variational principles, {\it  Proc. Amer. Math. Soc.}, {\bf  133} (2005), 2637–-2646. 

\bibitem{LoM1} P. Loridan  and  J. Morgan, New results on approximate solution in two-level optimization, {\it Optimization}, {\bf 20}(6) (1989), 819--836.


\bibitem{LoM3} P. Loridan  and  J. Morgan,  $\eps$-regularized two level optimization problems; approximation and existing results, in {\it Lecture Notes in Mathematics}, {\bf 1405} (1989), pp. 99--113, Springer.




\bibitem{TZl2} H. Topalova, N. Zlateva, Generic continuity of the perturbed minima of certain parametric optimization problems, {\it Positivity}, {\bf 29}(3) (2025), 1--17.

\bibitem{H_Stack} H. von Stackelberg, Market Structure and Equilibrium, Springer, (2011).

\bibitem{S} C. Stegall,  Optimization of functions on certain  subsets of Banach spaces, {\it Math. Ann.} {\bf 236} (1978), 171--176.

\bibitem{Zo} T. Zolezzi, Well-posed control problems: a perturbation approach, in B.S.  Mordukhovich et all (eds.), Nonsmooth Analysis and Geometric Methods in Deterministic Optimal Control, IMA Proceedings vol. 78, Springer (1996), pp. 239--246.


\end{thebibliography}
\end{document}